\theoremstyle{plain}
\newtheorem{theorem}{Theorem}
\newtheorem{lemma}{Lemma}
\newtheorem{proposition}{Proposition}
\theoremstyle{definition}
\theoremstyle{example}
\theoremstyle{remark}
\numberwithin{equation}{section}
\begin{document}

\begin{center}
{\bf\Large The topological filtration of $\gamma$-structures}
\\
\vspace{15pt} Thomas J. X. Li, Christian M. Reidys$^{\,\star}$
\end{center}

\begin{center}
Institut for Matematik og Datalogi,\\
University of Southern Denmark,\\
 Campusvej 55, DK-5230 Odense M, Denmark\\
E-mail: duck@santafe.edu
\end{center}
\centerline{\bf Abstract}{\small
In this paper we study $\gamma$-structures filtered by topological genus.
$\gamma$-structures are a class of RNA pseudoknot structures that plays
a key role in the context of polynomial time folding of RNA pseudoknot
structures. A $\gamma$-structure is composed by specific building
blocks, that have topological genus less than or equal to $\gamma$,
where composition means concatenation and nesting of such blocks.
Our main results are the derivation of a new bivariate generating function
for $\gamma$-structures via symbolic methods, the singularity analysis of
the solutions and a central limit theorem for the distribution of
topological genus in $\gamma$-structures of given length.
In our derivation specific bivariate polynomials play a central role.
Their coefficients count particular motifs of fixed topological genus
and they are of relevance in the context of genus recursion and novel
folding algorithms.}

{\bf Keywords}:
$\gamma$-structure , Genus filtration , Irreducible shadow , Generating function
, Singularity analysis    , Central limit theorem
2010 MSC: 05A16, 92E10



\section{Introduction}

An RNA sequence is a linear, oriented sequence of the nucleotides (bases)
{\bf A,U,G,C}. These sequences ``fold'' by establishing bonds between
pairs of nucleotides. These bonds cannot form arbitrarily, a nucleotide can
at most establish one bond and the global conformation of an RNA molecule
is determined by topological constraints encoded at the level of secondary
structure, i.e., by the mutual arrangements of the base pairs \cite{Bailor:10}.
Secondary structures can be interpreted as (partial) matchings in a graph of
permissible base pairs \cite{Tabaska:98}. When represented as a diagram,
i.e.~as a graph whose vertices are drawn on a horizontal line with arcs in
the upper halfplane on refers to a secondary structure with crossing arcs
as a pseudoknot structure.

Folded configurations are energetically optimal. Here energy means free
energy, which is dominated by the stacking of adjacent base pairs and
not by the hydrogen bonds of the individual base pairs \cite{Mathews:99},
as well as minimum arc-length conditions \cite{Waterman:78aa}.
That is, a stack is tantamount to a sequence of parallel arcs
$((i,j),(i+1,j-1),\dots,(i+\tau-1,j-\tau+1))$.
In particular, only configurations without isolated bonds and without bonds
of length one (formed by immediately subsequent nucleotides) are observed in
RNA structures. For a given RNA sequence polynomial-time dynamic programming
(DP) algorithms can be devised, finding such minimal energy configurations.

The topological classification of RNA structures \cite{Bon:08,rnag3} has
recently been translated into an efficient dynamic programming algorithm
\cite{gfold}.
This algorithm {\it a priori} folds into a novel class of pseudoknot structures,
the $\gamma$-structures.
$\gamma$-structures differ from pseudoknotted RNA structures of fixed topological
genus of an associated fatgraph or double line graph \cite{Orland:02} and
\cite{Bon:08}, since they have not a fixed genus.
They are composed by irreducible subdiagrams whose individual genus is bounded by
$\gamma$ and contain no bonds of length one ($1$-arcs), see Section~\ref{S:Background}
for details. The folding of $\gamma$-structures has led to unprecidented sensitivity
and positive predictive value \cite{gfold}.

In this paper we study $\gamma$-structures filtered by topological genus,
i.e.~partial matchings composed by irreducible motifs of genus $\le \gamma$,
without $1$-arcs.
These motifs are called irreducible shadows and discussed in detail in
Section~\ref{S:Background}.
To consider the topological filtration of $\gamma$-structures is tantamount to
constructing a new bivariate generating function. This construction recruits
specific, bivariate generating polynomials that are associated to irreducible
shadows of genus $\leq \gamma$, which we denote by ${\bf Is}_{\gamma}(z,t)$.
For example for
$\gamma=1,2$ we have
\begin{eqnarray*}
{\bf Is}_{1}(z,t)&=&\left( 1+z \right) ^{2}{z}^{2}t,\\
{\bf Is}_{2}(z,t)&=& \left( 1+z \right) ^{4}{z}^{4}
       \left( 24\,z+17 \right)  \left( 4\,z+1
 \right) {t}^{2}+ \left( 1+z \right) ^{2}{z}^{2}t.
\end{eqnarray*}
The bivariate algebraic equations for $\gamma$-structures discovered here
are instrumental for obtaining recursions for computing shadows of genus
$g$ from those of smaller genera. Similar to the Zagier-Harer generating
function \cite{Zagier:95} it is a fascinating prospect to derive a recursion
for the polynomials ${\bf Is}_{g}(z,t)$. Here it will be vital to obtain hints
for bijective proof hidden in the algebraic formulas. Common factors of
these polynomials whose coefficients count numbers of irreducible shadows
of fixed genus will be the key for deeper understanding.
Results along these lines will have profound algorithmic impact and offer
novel insights in how to fold topological $\gamma$-structures faster.

We then study topological $\gamma$-structures from a probabilistic point
of view. Regarding the bivariate generating functions as parameterized
univariate functions we can prove a central limit theorem for the
distribution of topological genera in $\gamma$ structures of fixed
length $n$. We find that the expected genus of a canonical $1$-structure,
i.e.~a structure that does not contain any isolated arcs, is given by
$0.04123\ n$ with a variance of $0.0093 \ n$. Thus the expected genus is
linear in $n$ and in particular a canonical $1$-structure
of length $100$ has an expected genus of $4$, see Fig.~\ref{F:2}.

\begin{figure}
\begin{center}
\includegraphics[width=0.4\textwidth]{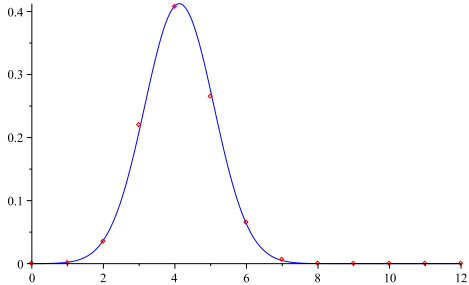}
\end{center}
\caption{\small The central limit distribution: we display the distribution of
topological genera of canonical $1$-structures for $n=100$.
}\label{F:2}
\end{figure}


\section{Background}


\subsection{$\gamma$-diagrams}\label{S:Background}

A diagram is a labeled graph over the vertex set $[n]=\{1, \dots, n\}$ in
which each vertex has degree $\le 3$, represented by drawing its vertices
in a horizontal line and its arcs $(i,j)$, where $i<j$, are
drawn in the upper half-plane. The backbone of a diagram is the sequence of
consecutive integers $(1,\dots,n)$ together with the edges $\{\{i,i+1\}
\mid 1\le i\le n-1\}$. We shall distinguish the backbone edge
$\{i,i+1\}$ from the arc $(i,i+1)$, which we refer to as a $1$-arc.
A stack of length $\tau$ is a maximal sequence of ``parallel'' arcs,
$((i,j),(i+1,j-1),\dots,(i+\tau-1,j-\tau+1))$.
A stack of length $\ge \tau$ is called a $\tau$-canonical stack.
In particular, a stack of length one is an isolated arc.

The specific drawing of a diagram $G$ in the plane determines a cyclic
ordering on the half-edges of the underlying graph incident on each vertex,
thus defining a corresponding fatgraph $\mathbb{G}$. The collection of
cyclic orderings is called fattening, one such ordering on the half-edges
incident on each vertex.
Each fatgraph $\mathbb{G}$ determines an oriented surface $F(\mathbb{G})$
\cite{Loebl:08,Penner:10} which is connected if $G$ is and has some
associated genus $g(G)\ge 0$ and number $r(G)\ge 1$ of boundary components.
Clearly, $F(\mathbb{G})$ contains $G$ as a deformation retract
\cite{Massey:69}.
Fatgraphs were first applied to RNA secondary structures in
\cite{Waterman:93,Penner:03}.

A diagram $G$ hence determines a unique surface $F(\mathbb{G})$
(with boundary). Filling the boundary components with discs we can
pass from $F(\mathbb{G})$ to a surface without boundary. Euler
characteristic, $\chi$, and genus, $g$, of this surface is given by
$$
\chi =  v - e + r\quad\text{\rm and }\quad
g  =  1-\frac{1}{2}\chi,
$$
respectively, where $v,e,r$ is the number of discs, ribbons and boundary
components in $\mathbb{G}$, \cite{Massey:69}.
The genus of a diagram is that of its associated surface without boundary.

A shadow is a diagram without noncrossing arcs, isolated vertices and
stacks of length greater than one. The shadow of a diagram of genus $g$
is obtained by removing all noncrossing arcs, deleting all isolated vertices
and collapsing all induced stacks to single arcs, see Fig.~\ref{F:shadow}.

\begin{figure}
\begin{center}
\includegraphics[width=1\textwidth]{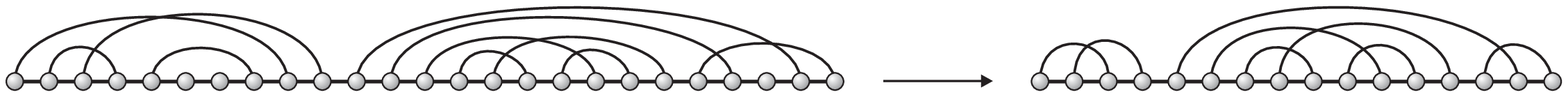}
\end{center}
\caption{\small Shadows: the shadow is obtained by removing all noncrossing
arcs and isolated points and collapsing all stacks and resulting
stacks into single arcs.
}\label{F:shadow}
\end{figure}

The shadow of a diagram $G$, $\sigma(G)$, can possibly
be empty. Furthermore, projecting into the shadow does not affect genus.
Any shadow of genus $g$ over one backbone contains at least $2g$ and at most
$(6g-2)$ arcs. In particular, for fixed genus $g$, there exist only finitely
many shadows \cite{fenix2bb}.

A diagram is called irreducible, if and only if for any two arcs,
$\alpha_1,\alpha_k$ contained in $E$, there exists a sequence of
arcs $(\alpha_1,\alpha_2,\dots,\alpha_{k-1},\alpha_k)$
such that $(\alpha_i,\alpha_{i+1})$ are crossing. Irreducibility
is equivalent to the concept of primitivity introduced by \cite{Bon:08}.
According to \cite{fenix2bb}, for arbitrary genus $g$ and
$2g\le\ell\le (6g-2)$, there exists an irreducible shadow of genus
$g$ having exactly $\ell$ arcs.

The shadow $\sigma(G)$ of a diagram $G$ decomposes into
a set of irreducible shadows. We shall call these shadows irreducible
$G$-shadows. A diagram, $G$, is a $\gamma$-diagram if and only if for any
irreducible $G$-shadow, $G'$, $g(G')\le \gamma$ holds, see Fig.~\ref{F:irrdec}.

\begin{figure}
\begin{center}
\includegraphics[width=0.9\textwidth]{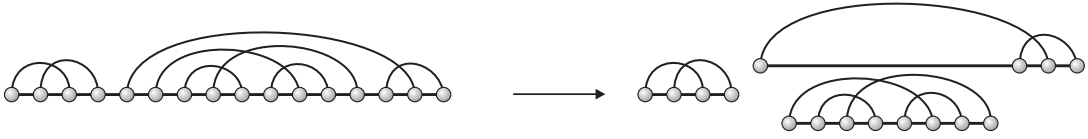}
\end{center}
\caption{\small The shadow $\sigma(G)$ of a diagram $G$ decomposes into
a set of irreducible shadows, which implies that $G$ is a $2$-diagram.
}\label{F:irrdec}
\end{figure}

We call $\tau$-canonical $\gamma$-diagrams without arcs of
the form $(i,i+1)$ ($1$-arcs) $\tau$-canonical
$\gamma$-structures and their set is denoted by $\mathcal{G}_{\tau,\gamma}$.
The set of $\gamma$-diagrams that contain only vertices of degree three
($\gamma$-matchings) is denoted by $\mathcal{H}_{\gamma}$ and the set of
$\gamma$-matchings that contain only stacks of length one ($\gamma$-shapes)
is denoted by $\mathcal{S}_\gamma$.

A stack of length $\tau$,
$((i,j),(i+1,j-1),\dots,(i+\tau-1,j-\tau+1))$ induces a sequence
of pairs $(([i, i+1],[j, j-1]),([i+1,i+2],[j-1,j-2])\dots)$.
We call any of these $2(\tau-1)$ intervals a $P$-interval.
The interval $[i+\tau-1,j-\tau+1]$ is called a $\sigma$-interval,
see Fig.~\ref{F:interval}. We distinguish these two types of interval for special
manipulation in the inflation step.

\begin{figure}
\begin{center}
\includegraphics[width=0.7\textwidth]{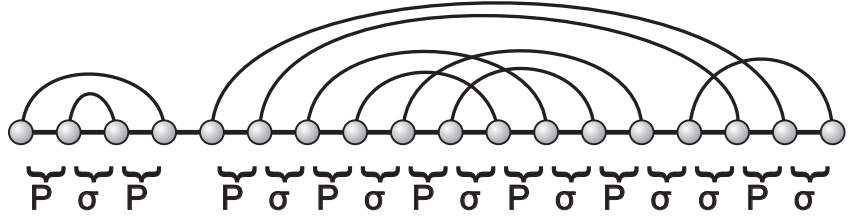}
\end{center}
\caption{\small $\sigma$-intervals and $P$-intervals.
}\label{F:interval}
\end{figure}
\subsection{Some generating functions}

Let ${\bf i}(g,n)$ denote the number of irreducible shadows of genus $g$
with $n$ arcs. Since for fixed genus $g$ there exist only finitely many
shadows we have the generating polynomial of irreducible shadows of genus
$g$
$$
{\bf I}_g(z)=\sum_{n=2g}^{6g-2}\,{\bf i}(g,n)z^n.
$$
For instance, for genus $1$ and $2$ we have
\begin{eqnarray*}
{\bf I}_1(z) &=& z^2  +  2z^3+   z^4,\\
{\bf I}_2(z) &=& 17z^4+160z^5+566z^6+1004z^7+961z^8+476z^9+96z^{10}.
\end{eqnarray*}
We denote the bivariate generating polynomial of irreducible shadows of genus
$\leq \gamma$ by
$$
{\bf Is}_{\gamma}(z,t)=\sum_{g\leq \gamma}\,{\bf I}_{g}(z) t^g.
$$
For example for $\gamma=1$ and $\gamma=2$ we have
\begin{eqnarray*}
{\bf Is}_{1}(z,t)&=&\left( 1+z \right) ^{2}{z}^{2}t,\\
{\bf Is}_{2}(z,t)&=& \left( 1+z \right) ^{4}{z}^{4}
       \left( 24\,z+17 \right)  \left( 4\,z+1
 \right) {t}^{2}+ \left( 1+z \right) ^{2}{z}^{2}t.
\end{eqnarray*}
Let ${\bf h}_{\gamma}(g,n)$ denote the number of $\gamma$-matchings of
genus $g$ with $n$ arcs. The univariate and bivariate generating
functions of $\gamma$-matchings are given by
$$
{\bf H}_{\gamma}(z)=\sum_{n}\sum_{g}\,{\bf h}_{\gamma}(g,n)z^n,\quad
{\bf H}_{\gamma}(z,t)=\sum_{g,n}\,{\bf h}_{\gamma}(g,n) t^g z^n.
$$
Let ${\bf s}_{\gamma}(g,n,m)$ denote the number of $\gamma$-shapes of
genus $g$ with $n$ arcs and $m$ $1$-arcs with generating functions of
$$
{\bf S}_{\gamma}(z,t,e)=\sum_{g,n,m}\,{\bf s}_{\gamma}(g,n,m) t^g z^n e^m.
$$
Finally, ${\bf G}_{\tau,\gamma}(g,n)$ denotes the number of $\tau$-canonical
$\gamma$-structures of genus $g$ with $n$ vertices
with generating function
$$
{\bf G}_{\tau,\gamma}(z,t)=\sum_{g,n}\,{\bf G}_{\tau,\gamma}(g,n) t^g z^n.
$$

\subsection{A central limit theorem}

We next discuss a central limit theorem due to Bender \cite{Bender:73}.
It is proved by analyzing the characteristic function using the
L\'{e}vy-Cram\'{e}r Theorem (Theorem IX.4 in \cite{Flajolet:07a}).
\begin{theorem}\label{T:normal}
Suppose we are given the bivariate generating function
\begin{equation*}
f(z,u)=\sum_{n,t\geq 0}f(n,t)\,z^n\,u^t,
\end{equation*}
where $f(n,t)\geq 0$
and $f(n)=\sum_tf(n,t)$. Let $\mathbb{X}_n$ be a r.v.~such that
$\mathbb{P}(\mathbb{X}_n=t)=f(n,t)/f(n)$. Suppose
\begin{equation*}
[z^n]f(z,e^s)= c(s) \, n^{\alpha}\,  \gamma(s)^{-n}
\left(1+ O\left( \frac{1}{n}\right) \right)
\end{equation*}
uniformly in $s$ in a neighborhood of $0$, where $c(s)$ is continuous
and nonzero near $0$, $\alpha$ is a constant, and $\gamma(s)$ is analytic
near $0$.
Then there exists a pair $(\mu,\sigma)$ such that the normalized
random variable
\begin{equation*}
\mathbb{X}^*_{n}=\frac{\mathbb{X}_{n}- \mu \, n}{\sqrt{{n\,\sigma}^2 }}
\end{equation*}
converges in distribution to a Gaussian variable with a speed of convergence
$O(n^{-\frac{1}{2}})$.
That is we have
\begin{equation*}
\lim_{n\to\infty}
\mathbb{P}\left(
\mathbb{X}^*_{n} < x \right)  =
\frac{1}{\sqrt{2\pi}} \int_{-\infty}^{x}\,e^{-\frac{1}{2}c^2} dc \,
\end{equation*}
where $\mu$ and $\sigma^2$ are given by
\begin{equation*}
\mu= -\frac{\gamma'(0)}{\gamma(0)}
\quad \text{\it and} \quad \sigma^2=
\left(\frac{\gamma'(0)}{\gamma(0)}
\right)^2-\frac{\gamma''(0)}{\gamma(0)}.
\end{equation*}
\end{theorem}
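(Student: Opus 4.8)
The plan is to establish this as an instance of Bender's classical quasi-powers framework, following the scheme laid out in Theorem IX.4 of Flajolet--Sedgewick. The starting point is the substitution $u = e^s$, which turns the bivariate function into a family of univariate functions $z \mapsto f(z, e^s)$ parameterized analytically by $s$ near $0$. By hypothesis, the $n$-th coefficient admits the uniform asymptotic expansion $[z^n] f(z, e^s) = c(s)\, n^{\alpha}\, \gamma(s)^{-n}(1 + O(1/n))$. The first step is to pass to the probability generating function of $\mathbb{X}_n$: since $\mathbb{E}(u^{\mathbb{X}_n}) = \sum_t \mathbb{P}(\mathbb{X}_n = t)\, u^t = [z^n]f(z,u) / [z^n]f(z,1)$, evaluating at $u = e^s$ yields
\begin{equation*}
\mathbb{E}\!\left(e^{s\mathbb{X}_n}\right) = \frac{[z^n]f(z,e^s)}{[z^n]f(z,1)}
= \frac{c(s)}{c(0)}\left(\frac{\gamma(0)}{\gamma(s)}\right)^{\!n}\left(1 + O\!\left(\tfrac{1}{n}\right)\right),
\end{equation*}
uniformly for $s$ in a fixed complex neighborhood of $0$. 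This is precisely the quasi-powers form $A(s)\, B(s)^n (1 + O(\kappa_n))$ with $B(s) = \gamma(0)/\gamma(s)$ analytic and nonvanishing near $0$, $B(0) = 1$, $A(s) = c(s)/c(0)$ continuous with $A(0) = 1$, and error $\kappa_n = 1/n \to 0$.

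Next I would read off the mean and variance from the logarithmic derivatives of $B$. Writing the moment generating function of the centered-and-scaled variable, one computes that $\mathbb{X}_n$ has mean $\sim \mu n$ and variance $\sim \sigma^2 n$ with $\mu = B'(0)/B(0) = -\gamma'(0)/\gamma(0)$ and $\sigma^2 = B''(0)/B(0) + B'(0)/B(0) - (B'(0)/B(0))^2$; a short calculation with $B(s) = \gamma(0)/\gamma(s)$ gives $B'(0)/B(0) = -\gamma'(0)/\gamma(0)$ and $B''(0)/B(0) = 2(\gamma'(0)/\gamma(0))^2 - \gamma''(0)/\gamma(0)$, which after simplification yields exactly $\sigma^2 = (\gamma'(0)/\gamma(0))^2 - \gamma''(0)/\gamma(0)$, matching the statement. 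Then the core analytic step: consider the characteristic function of $\mathbb{X}^*_n$, namely $\varphi_n(\theta) = \mathbb{E}\big(e^{i\theta \mathbb{X}^*_n}\big) = e^{-i\theta\mu n/(\sigma\sqrt{n})}\, \mathbb{E}\big(e^{(i\theta/(\sigma\sqrt{n}))\mathbb{X}_n}\big)$. Substituting $s = i\theta/(\sigma\sqrt n)$ (legitimate since this lies in the neighborhood of $0$ for $n$ large, and the expansion is uniform there), expand $\log B(s) = B'(0)s + \tfrac12 B''(0) s^2 + O(s^3)$; the linear term cancels against the recentering factor, the quadratic term contributes $-\theta^2/2$, and the cubic remainder plus the $O(1/n)$ relative error are both $o(1)$. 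Hence $\varphi_n(\theta) \to e^{-\theta^2/2}$ pointwise in $\theta$, and by the L\'evy--Cram\'er continuity theorem $\mathbb{X}^*_n$ converges in distribution to a standard Gaussian.

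Finally, for the speed of convergence $O(n^{-1/2})$, I would invoke the Berry--Esseen-type refinement built into the quasi-powers theorem: the error in the Gaussian approximation of the distribution function is governed by $\max(\kappa_n, n^{-1/2})$, and here $\kappa_n = O(1/n)$, so the dominant term is $n^{-1/2}$. This requires controlling $\varphi_n(\theta)$ not just pointwise but with explicit bounds on a growing interval $|\theta| \le \delta\sqrt n$ and applying the Berry--Esseen inequality; the uniformity of the original asymptotic expansion in $s$ is exactly what makes this possible. The main obstacle — and the place where one must be careful rather than merely routine — is justifying that the single hypothesis ``uniformly in $s$ in a neighborhood of $0$'' is strong enough: one needs the expansion to hold with $s$ allowed to be complex (so that $e^s$ sweeps out a genuine neighborhood, not just a real interval), and one must ensure the implied constant in the $O(1/n)$ term does not blow up as $s \to 0$, since otherwise the substitution $s = i\theta/(\sigma\sqrt n)$ would not be controllable. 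Granting this uniformity, the remainder is a direct transcription of the standard proof of Bender's theorem, and I would cite Theorem IX.4 of \cite{Flajolet:07a} for the detailed estimates rather than reproduce them.
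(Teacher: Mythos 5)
Your route is exactly the one the paper itself relies on: the paper gives no proof of this theorem, stating it as Bender's result and pointing to the characteristic-function/L\'evy--Cram\'er argument packaged in the quasi-powers framework (Theorem IX.4 of Flajolet--Sedgewick), which is precisely what you reconstruct, including the uniformity caveat about complex $s$ and the Berry--Esseen refinement for the $O(n^{-1/2})$ speed. One slip to fix: having set $B(s)=\gamma(0)/\gamma(s)$, your parameter is already $s$, so the second cumulant is $(\log B)''(0)=\frac{B''(0)}{B(0)}-\left(\frac{B'(0)}{B(0)}\right)^2$; the formula you quote, $\sigma^2=\frac{B''(0)}{B(0)}+\frac{B'(0)}{B(0)}-\left(\frac{B'(0)}{B(0)}\right)^2$, is the $u$-variable form (the extra first-order term arises from differentiating $\log B(e^s)$ twice in $s$), and with your $B$ it would give $(\gamma'(0)/\gamma(0))^2-\gamma''(0)/\gamma(0)-\gamma'(0)/\gamma(0)$, contradicting the simplification you claim. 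The same oversight appears in $\log B(s)=B'(0)s+\tfrac12 B''(0)s^2+O(s^3)$, which omits the term $-\tfrac12 B'(0)^2s^2$; restoring it, the quadratic part of the exponent is $\tfrac12(\log B)''(0)s^2$ and the computation does yield the stated $\mu$ and $\sigma^2$. With that correction your argument is the standard proof the paper cites.
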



\section{$\gamma$-matchings}\label{S:match}



\subsection{$\gamma$-matchings}

Given a matching $G$, an arc is called maximal if it is maximal with respect
to the partial order
$$
(i,j)\le (i',j') \quad \Longleftrightarrow \quad i'\le i\;\wedge j\le j'.
$$
The arcs-set of a matching $G$ gives rise to a (combinatorial) graph
$\varphi(G)$ obtained by mapping each labeled arc $\alpha$ into the
vertex $\varphi(\alpha)=v_\alpha$ connecting any two such vertices iff
the corresponding arcs are crossing in $G$, $\varphi\colon G \to \varphi(G)$.
A component of a matching $G$ is a set of arcs $A$ such that
$\varphi(A)$ is a component in $\varphi(G)$.
Considering the left- and rightmost endpoints of a component containing
some maximal arc induces a partition of the backbone into subsequent
intervals. $G$ induces over each such interval a sub-matching, to which we
refer to as a block.
By construction, all maximal arcs of a fixed block are contained in a unique
component, see Fig.~\ref{F:blcok}.
\begin{figure}
\begin{center}
\includegraphics[width=0.9\textwidth]{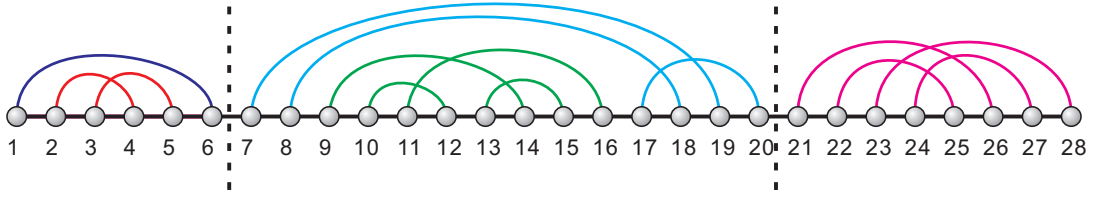}
\end{center}
\caption{\small A $2$-matching $G$ containing the maximal arcs
$(1,6)$, $(7,19)$, $(17,20)$, $(21,26)$, $(23,28)$,
five components and the three  blocks $G[1,6]$, $G[7,20]$, $G[21,28]$.
}\label{F:blcok}
\end{figure}

Any $\gamma$-matching can be decomposed by iteratively removing components
from top to bottom as follows:\\
$\bullet$ one decomposes a $\gamma$-matching into a sequence of blocks\\
$\bullet$ for each block, one removes the unique component containing
all its maximal arcs. \\
Each component can be viewd as a matching by considering it over a backbone.
In this context any such component has genus $\leq \gamma$. By construction
the shadow of a component is always irreducible, see Fig.~\ref{F:dec}.
\begin{figure}
\begin{center}
\includegraphics[width=0.9\textwidth]{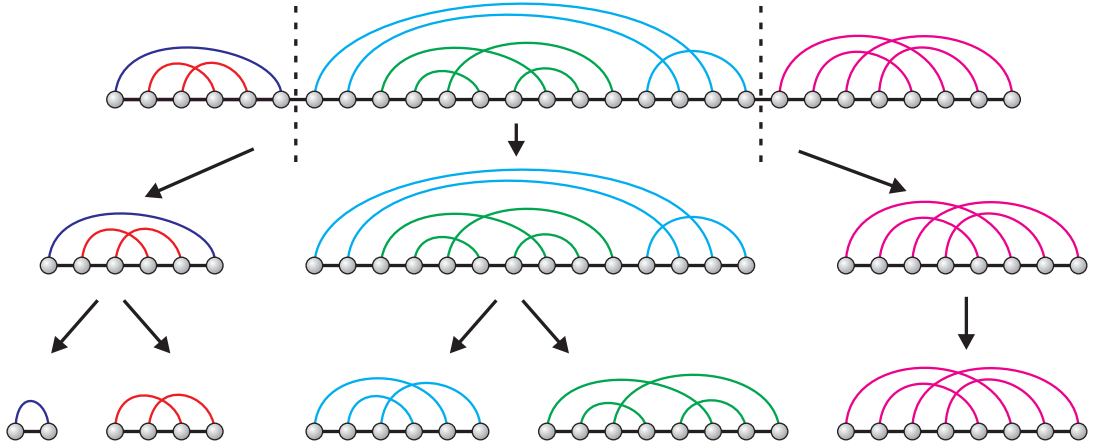}
\end{center}
\caption{\small Decomposition of a $2$-matching by iteratively removing components
from top to bottom.
}\label{F:dec}
\end{figure}

Accordingly, any diagram $G$ can iteratively be decomposed by first removing
all isolated vertices and second by removing components iteratively according
to the above procedure.

The genus of a $\gamma$-matching is additive in the context of the above
decomposition.

\begin{proposition}\label{P:genadd}
Suppose a matching $G$ decomposes into a series of
components $G_1, \ldots , G_n$. Then
$$
g(G)=g(G_1)+\cdots+g(G_n).
$$
\end{proposition}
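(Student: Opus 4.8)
The plan is to prove additivity of genus by induction on the number of components $n$, using the Euler-characteristic formula $g = 1 - \tfrac{1}{2}\chi$ together with the combinatorial structure of the decomposition. The base case $n=1$ is trivial. For the inductive step, I would split off the ``top'' component $G_1$ (the one containing all maximal arcs of the first block, in the iterative removal) and write $G = G_1 \cup G'$, where $G'$ is the matching obtained from $G$ by deleting $G_1$; by the inductive hypothesis $g(G') = g(G_2) + \cdots + g(G_n)$, so it suffices to show $g(G) = g(G_1) + g(G')$.

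The key step is to analyze how the fatgraph surface $F(\mathbb{G})$ is built from $F(\mathbb{G}_1)$ and $F(\mathbb{G}')$. First I would reduce to the case where $G$ is connected (has a single block): if $G$ has several blocks, the blocks sit over disjoint intervals of the backbone and interact only through backbone edges, so both $g(G)$ and the multiset $\{g(G_i)\}$ decompose blockwise, and one treats each block separately. So assume $G$ is a single block, with $G_1$ the unique top component. Removing $G_1$ leaves the sub-matchings nested inside the arcs of $G_1$ (or lying in the gaps between consecutive feet of $G_1$ along the backbone); these are precisely the connected pieces of $G'$, each confined to a $\sigma$-type interval of $G_1$. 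Topologically, $F(\mathbb{G})$ is obtained by gluing each piece of $F(\mathbb{G}')$ into $F(\mathbb{G}_1)$ along a single backbone interval — i.e.\ along an arc of a boundary component — which is a boundary connected sum type operation. The crucial fact is that such a gluing along a single interval on the backbone adds the genera and does not create extra handles: one checks $v, e, r$ additively, tracking that the number of boundary components satisfies $r(\mathbb{G}) = r(\mathbb{G}_1) + \sum r(\mathbb{G}_i') - (\text{number of pieces})$, because each insertion merges one boundary component of the host with one of the guest. Plugging into $\chi = v - e + r$ and $g = 1 - \tfrac12\chi$ then yields $g(G) = g(G_1) + g(G')$.

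The main obstacle is making the gluing/boundary-component bookkeeping rigorous: one must verify that inserting a nested sub-diagram into a single $\sigma$-interval of the top component genuinely merges exactly one boundary component of each surface and changes nothing else — in particular that no hidden identifications among half-edges across the two pieces occur. This is where the combinatorial definition of ``component'' (via the crossing graph $\varphi(G)$) and the fact that $G_1$ is a maximal crossing component really enter: arcs of $G'$ do not cross arcs of $G_1$, so in the fattening the cyclic orderings at vertices of $G_1$ and at vertices of $G'$ do not intermix, and the only interaction is through the shared backbone edges bounding the $\sigma$-intervals. I would phrase the induction so that a single such ``elementary insertion'' is the atomic move and prove additivity for that move by a direct Euler-characteristic count, then iterate. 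An alternative, perhaps cleaner, route is to invoke a known additivity-under-concatenation/nesting result for the topological genus of diagrams (as used in \cite{Bon:08,fenix2bb}) and simply observe that the component decomposition is a sequence of concatenations and nestings; but I would still want to spell out why removing a top component realizes such a sequence.
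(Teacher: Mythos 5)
Your proposal is correct and takes essentially the same route as the paper: both rest on the Euler-characteristic formula $g=1-\tfrac{1}{2}\chi$ together with the observation that arcs add while each concatenation/nesting merges exactly one boundary component (the paper's relations $n=n_1+n_2$, $r=r_1+r_2-1$). The paper simply reduces at once to the binary concatenation/nesting case and states this count, whereas you wrap the same count in an induction on components; the boundary-merging bookkeeping you flag as the main obstacle is exactly the relation the paper asserts without further proof.
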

\begin{proof}
It is suffice to prove the case of a matching $G$ generated by concatenating or nesting two components $G_1$
and $G_2$.\\
Let $n$, $n_1$ and $n_2$ denote the number of arcs in $G$, $G_1$
and $G_2$, respectively.  $r$, $r_1$ and $r_2$ denote the number of boundary components in $G$, $G_1$
and $G_2$, respectively. We have
\begin{equation}\label{E:genusadd1}
\begin{aligned}
2 \,g(G) &= 1+n-r,\\
2 \,g(G_1) &= 1+n_1-r_1,\\
2 \,g(G_2) &= 1+n_2-r_2.
\end{aligned}
\end{equation}
Observe that the following relations hold
\begin{equation}\label{E:genusadd2}
n=n_1+n_2,\quad r=r_1+r_2-1.
\end{equation}
Combining equations~(\ref{E:genusadd1}) and (\ref{E:genusadd2}), we have
\[
g(G)=g(G_1)+g(G_2)
\]
completing the proof.
\end{proof}

\subsection{A functional equation}

In \cite{Han:11a}, the generating function of $\gamma$-matchings has been
computed. In the following we shall refine this generating function by its
inherent genus-filtration.
\begin{theorem}\label{T:Hfuneqn}
Let $R=\mathbb{Z}[z,t]$. Then the following assertions hold:\\
{\bf (a)} the bivariate generating function of $\gamma$-matchings,
${\bf H}_{\gamma}(z,t)$, satisfies
\begin{equation}\label{E:HIrelation1}
{\bf H}_{\gamma}(z,t)^{-1}
 = 1- \left(z\,{\bf H}_{\gamma}(z,t)
 + {\bf H}_{\gamma}(z,t)^{-1}\,
 {\bf Is}_{\gamma}\left(\frac{z\,{\bf H}_{\gamma}(z,t)^2}
 {1-z \,{\bf H}_{\gamma}(z,t)^2},t\right) \right),
\end{equation}
or equivalently,
\begin{equation}\label{E:HIrelation}
{\bf H}_{\gamma}(z,t)-z\,{\bf H}_{\gamma}(z,t)^2-
{\bf Is}_{\gamma}\left(\frac{z\,{\bf H}_{\gamma}(z,t)^2}
 {1-z \,{\bf H}_{\gamma}(z,t)^2},t\right) =1.
\end{equation}
In particular, there exists a polynomial $P_\gamma(z,t,X)\in R[X]$
of degree $(12\gamma -2)$, such that
$P_\gamma(z,t,{\bf H}_{\gamma}(z,t))=0$.\\
{\bf (b)} eq.~(\ref{E:HIrelation}) determines ${\bf H}_{\gamma}(z,t)$
uniquely.
\end{theorem}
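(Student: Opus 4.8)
The plan is to translate the combinatorial decomposition of $\gamma$-matchings described in the preceding subsection into a functional equation via the symbolic method, then to verify uniqueness by a formal power series argument. The starting point is the block decomposition: a $\gamma$-matching is an ordered sequence of blocks, and a block is obtained by taking the unique top component (whose shadow is an irreducible shadow of genus $\le\gamma$) and substituting a $\gamma$-matching into each of its ``gaps.'' By Proposition~\ref{P:genadd} genus is additive under this substitution, so the $t$-variable simply multiplies. If $B_\gamma(z,t)$ denotes the generating function of a single block, then the sequence construction gives ${\bf H}_\gamma(z,t)=\frac{1}{1-B_\gamma(z,t)}$, i.e. ${\bf H}_\gamma^{-1}=1-B_\gamma$, which already matches the shape of~(\ref{E:HIrelation1}). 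It remains to identify $B_\gamma$ with $z\,{\bf H}_\gamma + {\bf H}_\gamma^{-1}\,{\bf Is}_\gamma\!\left(\frac{z{\bf H}_\gamma^2}{1-z{\bf H}_\gamma^2},t\right)$.

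The first summand $z\,{\bf H}_\gamma(z,t)$ accounts for the degenerate block consisting of a single maximal arc $(i,j)$ nesting a $\gamma$-matching: the arc contributes $z$ and the nested structure contributes ${\bf H}_\gamma(z,t)$. For the second summand I would argue as follows. A non-degenerate block has a top component with irreducible shadow of genus $g\le\gamma$ and, say, $n$ arcs; this shadow is recorded by the term ${\bf i}(g,n)\,z^n t^g$ appearing in ${\bf Is}_\gamma(z,t)$. To rebuild the block from the shadow one must (i) inflate each shadow-arc to a stack of arbitrary length $\ge 1$, and (ii) insert a $\gamma$-matching into each interval of the backbone of the inflated structure. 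Inflating one arc of the shadow to a stack of length $\ell\ge 1$ and filling the associated intervals: each arc contributes a geometric series in $z{\bf H}_\gamma^2$ (one factor $z$ per stacked arc, two factors ${\bf H}_\gamma$ per P-interval flanking it), giving the substitution $z\mapsto \frac{z{\bf H}_\gamma^2}{1-z{\bf H}_\gamma^2}$ into each of the $n$ shadow-arc slots; the remaining $\sigma$-intervals and the left/right ends contribute the extra factor ${\bf H}_\gamma^{-1}$ after bookkeeping. Carefully counting that exactly $2n+1$ intervals (or the appropriate number after accounting for which are P- versus $\sigma$-intervals) are created, with $2n$ of them absorbed into the substitution argument and leaving one overall factor ${\bf H}_\gamma^{-1}$, yields the claimed term ${\bf H}_\gamma^{-1}\,{\bf Is}_\gamma\!\left(\frac{z{\bf H}_\gamma^2}{1-z{\bf H}_\gamma^2},t\right)$. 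Equation~(\ref{E:HIrelation}) is then just~(\ref{E:HIrelation1}) multiplied through by ${\bf H}_\gamma$ and rearranged. For the algebraicity claim, clearing the denominator $1-z{\bf H}_\gamma^2$ in~(\ref{E:HIrelation}): since ${\bf Is}_\gamma(z,t)$ has $z$-degree $6\gamma-2$, substituting a rational function of $z$-degree behaviour $\sim {\bf H}_\gamma^2$ and clearing denominators produces a polynomial relation $P_\gamma(z,t,{\bf H}_\gamma)=0$; tracking the degree of ${\bf H}_\gamma$ through $(1-z{\bf H}_\gamma^2)^{6\gamma-2}$ against the rest gives the bound $12\gamma-2$.

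For part~(b), I would argue by extracting coefficients recursively. Writing ${\bf H}_\gamma(z,t)=\sum_{n\ge 0}h_n(t)z^n$ with $h_0=1$ (the empty matching), one checks that the right-hand side of~(\ref{E:HIrelation}), viewed as a formal power series in $z$ with coefficients in $\mathbb{Z}[t]$, determines $h_n(t)$ uniquely in terms of $h_0(t),\dots,h_{n-1}(t)$: the term $z{\bf H}_\gamma^2$ lowers $z$-degree by at least one before involving $h_n$, and every monomial of ${\bf Is}_\gamma$ has $z$-degree $\ge 2$, so the substituted argument $\frac{z{\bf H}_\gamma^2}{1-z{\bf H}_\gamma^2}$ has $z$-valuation $\ge 1$ and the composition contributes to the coefficient of $z^n$ only through $h_0,\dots,h_{n-1}$. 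Hence~(\ref{E:HIrelation}) has a unique formal power series solution, which must be ${\bf H}_\gamma(z,t)$.

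The main obstacle I anticipate is step~(ii): getting the interval bookkeeping exactly right so that the substitution argument is precisely $\frac{z{\bf H}_\gamma^2}{1-z{\bf H}_\gamma^2}$ and the leftover prefactor is precisely ${\bf H}_\gamma^{-1}$ and not some other power. This requires a careful case analysis distinguishing P-intervals (which sit between consecutive stacked arcs and are ``used up'' by the stack-inflation geometric series) from $\sigma$-intervals (the innermost intervals, which must separately receive a nested $\gamma$-matching), together with a correct count of how many $\sigma$-intervals an $n$-arc shadow produces and how the outermost endpoints contribute. Verifying the output against the known data — the explicit ${\bf Is}_1$, ${\bf Is}_2$ and the resulting ${\bf H}_\gamma$ of \cite{Han:11a} — will be the sanity check that the combinatorial translation is correct.
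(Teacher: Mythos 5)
Your route is the paper's own: decompose a $\gamma$-matching into a sequence of blocks, split the blocks into those whose maximal component is a single arc (term $z\,{\bf H}_{\gamma}$) and those whose maximal component has an irreducible shadow $\sigma$ of genus $g\le\gamma$, reconstruct the latter by inflating $\sigma$ and inserting $\gamma$-matchings, invoke Proposition~\ref{P:genadd} so the variable $t$ is multiplicative under this construction, then clear the $(1-zX^2)$-denominators to get $P_\gamma$ of degree $12\gamma-2$, and prove (b) by recursive coefficient extraction. Your valuation argument for (b) is correct and is essentially the paper's recursion in a cleaner form; the algebraicity/degree argument also matches.

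The one genuine gap is exactly the step you flag and leave open: the interval bookkeeping, and the tentative count you offer is wrong. Per shadow-arc, inflating to a stack of length $\ell$ creates $2(\ell-1)$ $P$-intervals (not $2\ell$), each of which may carry an arbitrary, possibly empty, $\gamma$-matching; so the per-arc factor is $\sum_{\ell\ge1}z^\ell\,{\bf H}_{\gamma}^{2(\ell-1)}=\frac{z}{1-z{\bf H}_{\gamma}^2}$, not $\frac{z{\bf H}_{\gamma}^2}{1-z{\bf H}_{\gamma}^2}$. (The paper obtains the same factor via a two-step inflation, sequences of ``induced arcs'' followed by stacks, which avoids ambiguous parsings.) Next, a block by definition begins at the leftmost and ends at the rightmost endpoint of its maximal component, so after inflation there are exactly $2n-1$ remaining gaps, namely those strictly between the $2n$ endpoints of $\sigma$, into which $\gamma$-matchings are inserted; there are no ``left/right end'' intervals inside a block -- concatenation to the outside is already handled by the sequence-of-blocks construction. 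Hence ${\bf T}_\sigma=t^g\left(\frac{z}{1-z{\bf H}_{\gamma}^2}\right)^{n}{\bf H}_{\gamma}^{2n-1}=t^g\,{\bf H}_{\gamma}^{-1}\left(\frac{z{\bf H}_{\gamma}^2}{1-z{\bf H}_{\gamma}^2}\right)^{n}$, and summing over all irreducible shadows yields the ${\bf Is}_{\gamma}$ term in (\ref{E:HIrelation1}). Your statement that ``exactly $2n+1$ intervals are created, with $2n$ of them absorbed, leaving one overall factor ${\bf H}_{\gamma}^{-1}$'' is internally inconsistent ($2n+1$ minus $2n$ would leave a factor ${\bf H}_{\gamma}^{+1}$); the prefactor ${\bf H}_{\gamma}^{-1}$ is not an interval at all but comes from the algebraic rewriting ${\bf H}_{\gamma}^{2n-1}={\bf H}_{\gamma}^{-1}\cdot{\bf H}_{\gamma}^{2n}$, which is what converts the per-arc factor into the substitution $\frac{z{\bf H}_{\gamma}^2}{1-z{\bf H}_{\gamma}^2}$. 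With that count supplied, your proof closes and coincides with the paper's.
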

\begin{proof}
We distinguish the classes of blocks into two categories characterized by
the unique component containing all maximal arcs (maximal component). Namely,\\
$\bullet$ blocks whose maximal component contains only one arc, \\
$\bullet$ blocks whose maximal component is an (nonempty) irreducible matching.\\
In the first case, the removal of the maximal component (one arc) generates
again a $\gamma$-matching, which translates into the term
$$
z\,{\bf H}_{\gamma}(z,t).
$$
Let ${\bf T}(z,t)$ denote the (genus filtered) generating function of blocks of
the second type. The decomposition of $\gamma$-matchings into a sequence of
blocks implies
\[
{\bf H}_{\gamma}(z,t)^{-1}
 = 1- \left( z\,{\bf H}_{\gamma}(z,t) + {\bf T}(z,t)\right).
\]
Let $\sigma$ be a fixed irreducible shadow of genus $g$ having $n$ arcs. Let
${\bf T}_\sigma(z,t)$ be the generating function of blocks, having $\sigma$ as
the shadow of its unique maximal component. Then we have
\[
{\bf T}(z,t)= \sum_{\sigma \in \mathcal{I}_\gamma}{\bf T}_\sigma(z,t),
\]
where $\mathcal{I}_\gamma$ denotes the set of irreducible shadows of genus
$\leq \gamma$.
We shall construct $\mathcal{T}_\sigma$ in three steps using arcs, $\mathcal{R}$,
sequences of arcs, $\mathcal{K}$, induced arcs, $\mathcal{N}$, sequence of
induced arcs, $\mathcal{M}$, and arbitrary $\gamma$-matching, $\mathcal{H}$.

{\bf Step I:} We inflate each arc in $\sigma$ into a sequence of induced arcs,
see Fig.~\ref{F:H1}. An induced arc, i.e.~an arc together with at least one
nontrivial $\gamma$-matching in either one or in both ${P}$-intervals
\begin{equation*}
\mathcal{N} = \mathcal{R}\times
\left( (\mathcal{H}-1)+(\mathcal{H}-1)+
(\mathcal{H}-1)^2\right)
=\mathcal{R}\times \left(\mathcal{H}^2-1\right).
\end{equation*}
\begin{figure}
\begin{center}
\includegraphics[width=0.8\textwidth]{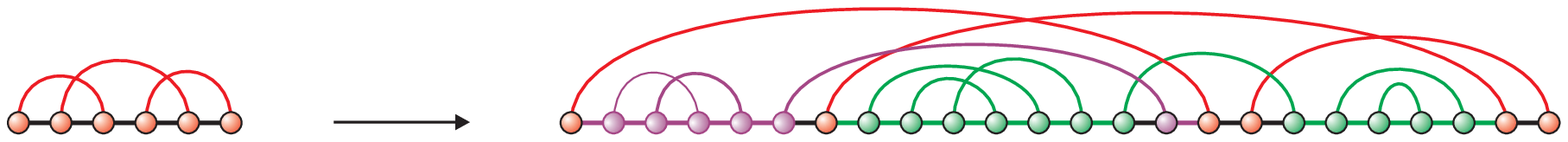}
\end{center}
\caption{\small {\bf Step I:} inflation of each arc in $\sigma$ into a sequence of induced arcs.
}\label{F:H1}
\end{figure}
Clearly, we have for a single induced arc
$\mathbf{N}(z,t)=z \left({\bf H}_{\gamma}(z,t)^2-1 \right)$, guaranteed
by Proposition~\ref{P:genadd},
and for a sequence of induced arcs, $\mathcal{M}=
\textsc{Seq}(\mathcal{N})$, where
\begin{eqnarray*}
{\bf M}(z,t) & = &  \frac{1}{1-z \left({\bf H}_{\gamma}(z,t)^2-1 \right)}.
\end{eqnarray*}
Inflating each arc into a sequence of induced arcs, $R^n \times \mathcal{M}^n$,
gives the corresponding generating function
\[
z^n {\bf M}(z,t)^n=\left(\frac{z}{1-z \left({\bf H}_{\gamma}(z,t)^2-1 \right)}\right)^n,
\]
since, by Proposition~\ref{P:genadd}, the genus is additive.

{\bf Step II:} We inflate each arc in the component with shadow $\sigma$ into
stacks, see Fig.~\ref{F:H2}. The corresponding generating function is
\[
\left(\frac{\frac{z}{1-z}}{1-\frac{z}{1-z}
\left({\bf H}_{\gamma}(z,t)^2-1 \right)}\right)^n=
\left(\frac{z}{1-z {\bf H}_{\gamma}(z,t)^2}\right)^n
\]
\begin{figure}
\begin{center}
\includegraphics[width=0.9\textwidth]{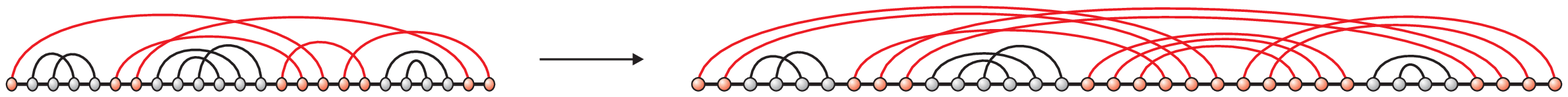}
\end{center}
\caption{\small  {\bf Step II:} inflation of each arc in the component with
shadow $\sigma$ into stacks.
}\label{F:H2}
\end{figure}
{\bf Step III:} We insert additional $\gamma$-matchings at exactly
$(2n-1)$ $\sigma$-intervals, see Fig.~\ref{F:H3}. Accordingly, the
generating function is ${\bf H}_{\gamma}(z,t)^{2n-1}$.
\begin{figure}
\begin{center}
\includegraphics[width=1\textwidth]{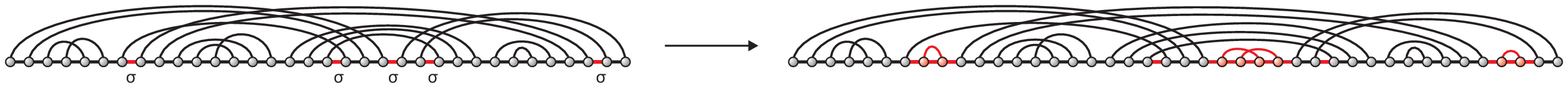}
\end{center}
\caption{\small  {\bf Step III:} insertion of additional $\gamma$-matchings at exactly
$(2n-1)$ $\sigma$-intervals.
}\label{F:H3}
\end{figure}

Combining these three steps and utilizing additivity of the genus, we arrive at
\begin{eqnarray*}
{\bf T}_\sigma(z,t)&=& t^g \left(\frac{z}{1-z {\bf H}_{\gamma}(z,t)^2}\right)^n
{\bf H}_{\gamma}(z,t)^{2n-1}\\
&=& t^g {\bf H}_{\gamma}(z,t)^{-1}\,
      \left(\frac{z {\bf H}_{\gamma}(z,t)^2}{1-z {\bf H}_{\gamma}(z,t)^2}\right)^n.
\end{eqnarray*}
Therefore
\begin{eqnarray*}
{\bf T}(z,t)&=& \sum_{\sigma \in \mathcal{I}_\gamma}{\bf T}_\sigma(z,t)\\
&=& \sum_{g \leq \gamma} {\bf i}(g,n) t^g \,{\bf H}_{\gamma}(z,t)^{-1}\,
\left(\frac{z {\bf H}_{\gamma}(z,t)^2}{1-z {\bf H}_{\gamma}(z,t)^2}\right)^n.
\end{eqnarray*}
We derive
\[
{\bf T}(z,t)={\bf H}_{\gamma}(z,t)^{-1} {\bf Is}
\left(\frac{z\,{\bf H}_{\gamma}(z,t)^2}{1-z\,{\bf H}_{\gamma}(z,t)^2},t\right),
\]
completing the proof of eq.~(\ref{E:HIrelation1}).

Note that ${\bf Is}_{\gamma}(z,t)$ are polynomials in $z$ of degree $6\gamma-2$.
Eq.~(\ref{E:HIrelation}) gives rise to the polynomial
\[
P_{\gamma}(z,t,X)=(1-z X^2)^{6\gamma-2} (-1+X-z X^2)
- (1-z X^2)^{6\gamma-2}  {\bf Is}_{\gamma}\left(\frac{z X^2}{1-z X^2},t\right),
\]
where $deg(P_{\gamma}(z,t,X))= 12\gamma -2$. Then
$P_\gamma(z,t,{\bf H}_{\gamma}(z,t))=0$, whence {\bf (a)}.

It remains to prove {\bf (b)}. Eq.~(\ref{E:HIrelation}) implies
\begin{eqnarray*}
&&(1-z {\bf H}_{\gamma}(z,t)^2)^{6\gamma-2} (-1+{\bf H}_{\gamma}(z,t)-z {\bf H}_{\gamma}(z,t)^2)\\
&&- (1-z {\bf H}_{\gamma}(z,t)^2)^{6\gamma-2}  {\bf Is}_{\gamma}
\left(\frac{z {\bf H}_{\gamma}(z,t)^2}{1-z {\bf H}_{\gamma}(z,t)^2},t\right)=0
\end{eqnarray*}
and consequently
\begin{equation}\label{E:HIrelation2}
 \begin{split}
{\bf H}_{\gamma}(z,t)
&= -{\bf H}_{\gamma}(z,t)\sum_{i=1}^{6\gamma-2}\,
{6\gamma-2 \choose i}
{(-z {\bf H}_{\gamma}(z,t)^2)^i}\\
&+(1+z\,{\bf H}_{\gamma}(z,t)^2)\,
(1-z {\bf H}_{\gamma}(z,t)^2)^{6\gamma-2} \\
&+  (1-z {\bf H}_{\gamma}(z,t)^2)^{6\gamma-2}
{\bf Is}_{\gamma}\left(\frac{z {\bf H}_{\gamma}(z,t)^2}{1-z {\bf H}_{\gamma}(z,t)^2},t\right).
\end{split}
\end{equation}
All coefficients of ${\bf H}_{\gamma}(z,t)$ in the RHS of
eq.~(\ref{E:HIrelation2}), are polynomials in $z$ of
degree $\geq 1$, whence any $[z^n t^g]{\bf H}_{\gamma}(z,t)$ for
$n\ge (6\gamma-1)$ can be recursively computed. Accordingly,
eq.~(\ref{E:HIrelation2}) determines ${\bf H}_{\gamma}(z,t)$ uniquely.
\end{proof}

{\bf Remark:} Proposition~\ref{P:genadd} makes the additional variable marking the genus compatible with the inflation procedure in Theorem~\ref{T:Hfuneqn}.\\
In particular for $\gamma=1$ and $\gamma=2$ we have
\begin{eqnarray*}
P_{1}(z,t,X)&=&-1+X+3\,{X}^{2}z-4\,{X}^{3}z-2\,{X}^{4}{z}^{2}-{X}^{4}t{z}^{2}+6\,{X}^
{5}{z}^{2}\\
&&-2\,{X}^{6}{z}^{3}-4\,{X}^{7}{z}^{3}+3\,{X}^{8}{z}^{4}+{X}^{9}{z}^{4}-{X}^{10}{z}^{5},\\
P_{2}(z,t,X)&=& -1+X+9\,{X}^{2}z-10\,{X}^{3}z-35\,{X}^{4}{z}^{2}-{X}^{4}t{z}^{2}+45\,{X}^{5}{z}^{2}\\
&&+75\,{X}^{6}{z}^{3}+6\,{X}^{6}t{z}^{3}-120\,{X}^{7}{z}^{3
}-90\,{X}^{8}{z}^{4}-15\,{X}^{8}t{z}^{4}-17\,{X}^{8}{t}^{2}{z}^{4}\\
&&+210\,{X}^{9}{z}^{4}+42\,{X}^{10}{z}^{5}+20\,{X}^{10}t{z}^{5}-58\,{X}^{10}{t}^{2}{z}^{5}-252\,{X}^{11}{z}^{5}\\
&&+42\,{X}^{12}{z}^{6}-15\,{X}^{12}t{z}^{6}-21\,{X}^{12}{t}^{2}{z}^{6}+210\,{X}^{13}{z}^{6}-90\,{X}^{14}{z}^{7}\\
&&+6\,{X}^{14}t{z}^{7}-120\,{X}^{15}{z}^{7}+75\,{X}^{16}{z}^{8}-{X}^{16}t{z}^{8}+45\,{X}^{17}{z}^{8}\\
&&-35\,{X}^{18}{z}^{9}-10\,{X}^{19}{z}^{9}+9\,{X}^{20}{z}^{10}+{X}^{21}{z}^{10}-{X}^{22}{z}^{11}.
\end{eqnarray*}



\subsection{Singularity analysis}


The bivariate generating function ${\bf H}_{\gamma}(z,t)$ is not explicitely
known but is completely characterized by the functional equation established
in Theorem~\ref{T:Hfuneqn}.

In the following we employ this implicit equation to obtain key information
about the singular expansion of ${\bf H}_{\gamma}(z,t)$, where we consider
the latter as a univariate generating function parameterized by $t$.
\begin{theorem}\label{T:AsymAlgPra}\cite{Flajolet:07a}
Let $F(z,t)$ be a bivariate function that is analytic at $(0,0)$ and has
non-negative coefficients. Assume that $F(z,t)$ is one of the solutions $y$
of a polynomial equation
\[
\Phi(z,t,y)=0,
\]
where $\Phi$ is a polynomial in $y$, such that $\Phi(z,1,y)$ satisfies the
conditions of Theorem~\ref{T:AsymG}. Define the resultant polynomial
\[
\Delta(z,t)= \mathbf{R} \left(\Phi(z,t,y),
\frac{\partial}{\partial y}\Phi(z,t,y),y \right).
\]
Let $\rho$ be the root of $\Delta(z,1)$, so that $y(z):=F(z,1)$ is
singular at $z =\rho $ and $y(\rho)=\pi$.
Let $\rho(t)$ be the unique root of the equation
\[
\Delta(\rho(t),t)=0,
\]
analytic at $1$, such that $\rho(1)=\rho$.
Then $F(z,t)$ has the singular expansion
\[
F(z,t)= \pi(t) + \lambda(t)\, \left(\rho(t)-z \right)^{\frac{1}{2}} \left(1+ o(1) \right),
\]
where $\pi(t)$ and $\lambda(t)$ are analytic at 1 such that
$\pi(1)=\pi$ and $\lambda(1) \neq 0$. Furthermore
\[
[z^n]F(z,t) = c(t)\, n^{-\frac{3}{2}} \rho(t)^{-n}\left(1+O\left(\frac{1}{n}\right)\right),
\]
uniformly for $t$ restricted to a small neighborhood of $1$, where $c(t)$ is
continuous and nonzero near 1.
\end{theorem}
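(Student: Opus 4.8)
\emph{Proof sketch.} The plan is to treat this as the ``algebraic singularity with a parameter'' result and to reduce it, via the implicit function theorem, to the one-variable statement of Theorem~\ref{T:AsymG}. First, specialising $t=1$, Theorem~\ref{T:AsymG} applies to $\Phi(z,1,y)$ and furnishes the point $(\rho,\pi)$ at which the combinatorial branch $y(z)=F(z,1)$ acquires its unique dominant singularity. Built into those hypotheses are the nondegeneracy facts $\Phi(\rho,1,\pi)=0$, $\partial_y\Phi(\rho,1,\pi)=0$, $\partial_y^2\Phi(\rho,1,\pi)\neq0$ and $\partial_z\Phi(\rho,1,\pi)\neq0$, which is precisely what makes the singularity a square-root branch point; and since the resultant $\Delta(z,t)$ vanishes exactly when $\Phi(z,t,\cdot)$ and $\partial_y\Phi(z,t,\cdot)$ share a root, this also records $\Delta(\rho,1)=0$.

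Next I would perturb in $t$. Consider the system $\{\Phi(z,t,y)=0,\ \partial_y\Phi(z,t,y)=0\}$ in the unknowns $(z,y)$; it is solved at $(\rho,1,\pi)$, and its Jacobian with respect to $(z,y)$ there is lower triangular,
\[
\begin{pmatrix}\partial_z\Phi & 0\\[2pt] \partial_z\partial_y\Phi & \partial_y^2\Phi\end{pmatrix},
\]
with determinant $\partial_z\Phi\cdot\partial_y^2\Phi\neq0$. The implicit function theorem then yields functions $\rho(t)$ and $\pi(t)$, analytic near $t=1$, with $\rho(1)=\rho$, $\pi(1)=\pi$, solving the system identically; in particular $\Delta(\rho(t),t)=0$, and the germ of such a root that is analytic at $1$ with value $\rho$ is unique. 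This is the $\rho(t)$ of the statement.

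Then I would run the local Newton--Puiseux analysis at the moving branch point. Fixing $t$ near $1$ and Taylor-expanding $\Phi$ about $(\rho(t),t,\pi(t))$, the linear term in $y-\pi(t)$ is absent while $\partial_y^2\Phi\neq0$ and $\partial_z\Phi\neq0$, so $\Phi(z,t,y)=0$ forces $(y-\pi(t))^2\sim -\big(2\,\partial_z\Phi/\partial_y^2\Phi\big)(z-\rho(t))$ and hence
\[
F(z,t)=\pi(t)+\lambda(t)\,(\rho(t)-z)^{1/2}\bigl(1+o(1)\bigr),
\qquad
\lambda(t)^2=-\frac{2\,\partial_z\Phi(\rho(t),t,\pi(t))}{\partial_y^2\Phi(\rho(t),t,\pi(t))},
\]
where the branch of the square root is pinned down by continuity with the $t=1$ case; then $\pi(t)$ and $\lambda(t)$ are analytic at $1$ and $\lambda(1)\neq0$. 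Identifying $F(z,t)$ itself (the branch analytic at the origin with non-negative coefficients) as the branch exhibiting this expansion is done by analytic continuation from small $z$ together with the fact that the singularities of an algebraic function lie among the roots of $\Delta(z,t)$ and of the leading coefficient of $\Phi$ in $y$, which move continuously in $t$; hence for $t$ near $1$ none of them overtakes $\rho(t)$, so $\rho(t)$ remains the unique dominant singularity and $F(z,t)$ stays $\Delta$-analytic in a dented domain whose shape can be chosen independently of $t$.

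Finally I would invoke the transfer theorem of singularity analysis in its form with a parameter: since the singular expansion is analytic in $t$ and valid on a uniform $\Delta$-domain, term-by-term transfer gives
\[
[z^n]F(z,t)=c(t)\,n^{-3/2}\,\rho(t)^{-n}\Bigl(1+O\bigl(\tfrac1n\bigr)\Bigr),
\qquad
c(t)=-\frac{\lambda(t)\,\rho(t)^{1/2}}{2\sqrt\pi},
\]
uniformly for $t$ in a small neighbourhood of $1$, with $c(t)$ continuous and nonzero there. The whole argument is the content of \cite{Flajolet:07a}; I expect the only genuinely delicate points to be (i) verifying that $\rho(t)$ persists as the \emph{unique} dominant singularity of $z\mapsto F(z,t)$ for all $t$ near $1$, i.e.\ the perturbation stability of the dominant root together with aperiodicity, and (ii) the uniformity in $t$ of the transfer, in particular justifying the $O(1/n)$ error uniformly, which rests on choosing the dented domain and the remainder estimates independently of $t$. \qed
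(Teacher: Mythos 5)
Your proposal is correct in substance and reaches the same conclusions, but it organizes the middle of the argument differently from the paper. You obtain the moving branch point $(\rho(t),\pi(t))$ by applying the implicit function theorem to the system $\{\Phi=0,\ \partial_y\Phi=0\}$ (your triangular Jacobian computation is right, since $\partial_y\Phi=0$ at the base point) and then do a local Newton--Puiseux computation to get the square-root expansion, choosing the branch ``by continuity.'' The paper instead invokes the Weierstrass Preparation Theorem to write $\Phi(z,t,y)=(y^2+c_1(z,t)y+c_2(z,t))\Lambda(z,t,y)$ near $(\rho,1,\pi)$, so that in a full $(z,t)$-neighborhood $F(z,t)=\tfrac12\bigl(-c_1-\sqrt{c_1^2-4c_2}\bigr)$, with the minus sign forced by realness, positivity of coefficients and monotonicity of $F(z,t)$ for real $z<\rho$, $t<1$; then $\rho(t)$ is produced by the implicit function theorem applied to the discriminant $C(z,t)=c_1^2-4c_2$. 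The two routes are essentially equivalent, but the preparation-theorem representation is exactly what discharges the point you flag as delicate: it identifies, uniformly for $(z,t)$ near $(\rho,1)$, which quadratic root the combinatorial branch is, so the branch identification and the validity of the expansion for $t\neq 1$ come for free rather than via the analytic-continuation/``no singularity overtakes $\rho(t)$'' argument you only sketch (note that continuity of the roots of $\Delta(\cdot,t)$ alone does not rule out that a root at which $F(\cdot,1)$ is regular becomes an actual singularity of $F(\cdot,t)$; one needs the implicit-function-theorem/compactness argument at such points, which both you and the paper ultimately defer to \cite{Flajolet:07a}). On the uniform transfer step the two proofs are on the same footing: the paper also cites the uniformity property of singularity analysis rather than re-proving it, and your explicit constant $c(t)=-\lambda(t)\rho(t)^{1/2}/(2\sqrt{\pi})$ is consistent with it.
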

The following proof is derived from Proposition IX. 17 and Theorem IX. 12 in~\cite{Flajolet:07a}.
\begin{proof}
By Theorem~\ref{T:AsymG}, the function $y(z)=F(z,1)$
has a square-root singularity at $z =\rho$ and admits a singular expansion of the form
\[
F(z,1)=\pi+ \lambda (\rho-z)^{\frac{1}{2}}+O(\rho-z),\quad \text{for some nonzero constant } \lambda.
\]
Singularity analysis then implies the estimate
\[
[z^n]F(z,1) =  c \, n^{-\frac{3}{2}} \rho^{-n} \left( 1+ O\left(\frac{1}{n} \right)\right).
\]
All that is needed now is a uniform lifting of relations above, for $t$ in a small
neighborhood of $1$.

First, the polynomial $\Phi(\rho,1,y)$ has a double
(not triple) zero at $y =\pi$, so that
\[
\left( \frac{\partial}{\partial y}\Phi(\rho,1,y)\right)_{y=\pi} =0 ,\quad \left( \frac{\partial^2}{\partial y^2}\Phi(\rho,1,y)\right)_{y=\pi}\neq 0 .
\]
Thus, the Weierstrass Preparation Theorem gives the local factorization at $(\rho,1,\pi)$
\[
\Phi(z,t,y)= (y^2+ c_1(z,t) y+ c_2(z,t)) \Lambda(z,t,y),
\]
where $\Lambda(z,t,y)$ is analytic and non-zero at $(\rho,1,\pi)$ while $c_1(z,t)$, $c_2(z,t)$ are analytic at $(z,t)=(\rho,1)$.

From the solution of the quadratic equation, we must have locally
\[
y=\frac{1}{2}\left( -c_1(z,t) \pm \sqrt{c_1(z,t)^2- 4 c_2(z,t)}\right).
\]
Consider first $(z,t)$ restricted by  $0 \leq z < \rho $ and $0\leq t <1$. Since $F(z,t)$ is real
there, we must have $c_1(z,t)^2- 4 c_2(z,t)$ also real and non-negative. Since $F(z,t)$ is
continuous and increasing with $z$ for fixed $t$, and since the discriminant $c_1(z,t)^2- 4 c_2(z,t)$ vanishes at $(\rho,1)$,
the minus sign has to be constantly taken. In summary, we have
\[
F(z,t)=\frac{1}{2}\left( -c_1(z,t) - \sqrt{c_1(z,t)^2- 4 c_2(z,t)}\right).
\]
Set $C(z,t):=c_1(z,t)^2- 4 c_2(z,t)$. The function  $C(z,1)$ has a simple real zero
at $z =\rho$. Thus, by the analytic implicit function theorem,
there exists for $t$ sufficiently close to  1,
a unique simple root $\rho(t)$ of the equation $C(\rho(t),t)=0$,
which is an analytic function of $t$ such that $\rho(1)=\rho$.
Set $\widetilde{C}(z,t):= C(z\,\rho(t),t)$, where $C(z,t)$ is analytic at $(\rho,1)$.
Consequently $\widetilde{C}(z,t)$ is analytic at $(1,1)$, since it is a composition of two analytic function. When $t$ sufficiently close to  1, $\widetilde{C}(1,t)=0$ , since $C(\rho(t),t)=0$.
Taking its singular expansion of $\widetilde{C}(z,t)^{\frac{1}{2}}$ at $z=1$,
\[
\widetilde{C}(z,t)^{\frac{1}{2}} =(1-z)^{\frac{1}{2}}\sum_{n \geq 1} \widetilde{C}_n(t) (1-z)^n,
\]
where $\widetilde{C}_n(t)$ is analytic around 1.
For $t \rightarrow 1$, the singular expansion of $C(z,t)^{\frac{1}{2}}$ at $z=\rho(t)$ is given by
\[
C(z,t)^{\frac{1}{2}} =(\rho(t)-z)^{\frac{1}{2}}\sum_{n \geq 1} C_n(t) (\rho(t)-z)^n,
\]
where $C_n(t)$ is analytic around 1.

Then, since $c_1(z,t)$ and $c_2(z,t)$ are analytic, $F(z,t)$ has the singular expansion
\[
F(z,t)= \pi(t) + \lambda(t)\, \left(\rho(t)-z \right)^{\frac{1}{2}} \left(1+ o(1) \right),
\]
where $\pi(t)$ and $\lambda(t)$ are analytic at 1 such that $\pi(1)=\pi$ and $\lambda(1) \neq 0$.
Therefore transfer theorems and the uniformity property of singularity
analysis \cite{Flajolet:07a} imply that
\[
[z^n]F(z,t) = c(t)\, n^{-\frac{3}{2}} \rho(t)^{-n}\left(1+O\left(\frac{1}{n}\right)\right),
\]
uniformly for $t$ restricted to a small neighborhood of $1$, where $c(t)$ is
continuous and nonzero near 1.
\end{proof}

Combining Theorem~\ref{T:Hfuneqn} and Theorem~\ref{T:AsymAlgPra}, we derive

\begin{theorem}\label{T:AsymH}
Let $1\leq \gamma \leq 8$ and $\Delta_{\gamma}(z,s)$ be the resultant of $P_{\gamma}(z,e^s,X)$ and $P_{\gamma}(z,e^s,X)$ as polynomials of $X$.
Let $\rho_{\gamma}(s)$ be the unique root of the equation $\Delta_{\gamma}(\rho_\gamma(s),s)=0$,
analytic at $0$. \\
{\bf (a)} $\rho_{\gamma}(s)$ is the dominant singularity of ${\bf H}_{\gamma}(z,e^s)$, \\
{\bf (b)} then ${\bf H}_{\gamma}(z,e^s)$ has the expansion
\[
{\bf H}_{\gamma}(z,e^s) = \pi_\gamma(s) + \lambda_\gamma(s)\,
\left(\rho_{\gamma}(s) -z \right)^{\frac{1}{2}} \left(1+ o(1) \right),
\]
where $\pi_\gamma(s)$ and $\lambda_\gamma(s)$ are analytic at $0$ such that
$\pi(0)=\pi$ and $\lambda(0) \neq 0$.\\
{\bf (c)} the coefficients of ${\bf H}_{\gamma}(z,e^s) $ are asymptotically given by
\begin{eqnarray}
[z^n]{\bf H}_{\gamma}(z,e^s) &= &
c_\gamma(s) \, n^{-3/2}\, \left(\frac{1}{\rho_{\gamma}(s)}\right)^n
\left(1+ O\left( \frac{1}{n}\right) \right),
\end{eqnarray}
uniformly for $s$ restricted to a small neighborhood of $0$, where $c_\gamma(s)$ is
continuous and nonzero near 0.
\end{theorem}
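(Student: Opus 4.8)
The plan is to obtain Theorem~\ref{T:AsymH} as a direct application of Theorem~\ref{T:AsymAlgPra}, with the algebraic input supplied by Theorem~\ref{T:Hfuneqn}, followed by the harmless reparametrisation $t=e^s$. Concretely, I would take $\Phi(z,t,y)=P_{\gamma}(z,t,y)$ and $F(z,t)={\bf H}_{\gamma}(z,t)$. The function ${\bf H}_{\gamma}(z,t)$ is analytic at the origin and has non-negative coefficients, being the bivariate generating function of the combinatorial class $\mathcal{H}_{\gamma}$ graded by number of arcs and by genus; by Theorem~\ref{T:Hfuneqn}(a) it satisfies $P_{\gamma}(z,t,{\bf H}_{\gamma}(z,t))=0$ with $P_{\gamma}\in R[X]$ of degree $12\gamma-2$, and by Theorem~\ref{T:Hfuneqn}(b) it is the unique formal power series solution; replacing $P_{\gamma}$ by the irreducible factor annihilating ${\bf H}_{\gamma}$ if necessary, this provides the polynomial $\Phi$ required by Theorem~\ref{T:AsymAlgPra}, with $\Delta_{\gamma}(z,s)$ the resultant of $P_{\gamma}(z,e^s,X)$ and $\partial_X P_{\gamma}(z,e^s,X)$ as in the statement (the discriminant of $P_{\gamma}(z,e^s,X)$ in $X$, up to an inessential factor).

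The step that actually requires work is to check, for each fixed $\gamma$ with $1\le\gamma\le 8$, that the specialization $P_{\gamma}(z,1,X)$ meets the hypotheses of Theorem~\ref{T:AsymG}, i.e.\ that ${\bf H}_{\gamma}(z,1)$ is aperiodic and has a unique dominant singularity that is a simple square-root branch point. Since $P_{\gamma}(z,1,X)$ is written down explicitly, this is a finite verification for each $\gamma$: compute $\Delta_{\gamma}(z,0)$ — the discriminant of $P_{\gamma}(z,1,X)$ in $X$ up to a factor — locate its least positive real root $\rho=\rho_{\gamma}(0)$ and the corresponding value $\pi$ of $y$, and confirm that $P_{\gamma}(\rho,1,\pi)=0$, $\partial_X P_{\gamma}(\rho,1,\pi)=0$, $\partial_X^2 P_{\gamma}(\rho,1,\pi)\neq 0$ and $\partial_z P_{\gamma}(\rho,1,\pi)\neq 0$, so that the branch point is genuinely of square-root type, that $\rho$ is a simple root of $\Delta_{\gamma}(z,0)$, and that no other root of $\Delta_{\gamma}(z,0)$ lies on the circle $|z|=\rho$; aperiodicity is read off from the support of the coefficients. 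Granting this, Theorem~\ref{T:AsymG} gives ${\bf H}_{\gamma}(z,1)=\pi+\lambda(\rho-z)^{1/2}+O(\rho-z)$ with $\lambda\neq 0$, and singularity analysis yields $[z^n]{\bf H}_{\gamma}(z,1)=c\,n^{-3/2}\rho^{-n}(1+O(1/n))$.

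With the hypotheses of Theorem~\ref{T:AsymAlgPra} in place, the conclusion is then obtained by transporting its output through $t=e^s$. Theorem~\ref{T:AsymAlgPra} furnishes the unique root $\rho_{\gamma}(t)$ of $\Delta_{\gamma}(\rho_{\gamma}(t),t)=0$ that is analytic at $t=1$ with $\rho_{\gamma}(1)=\rho$, the uniform singular expansion
\[
{\bf H}_{\gamma}(z,t)=\pi_{\gamma}(t)+\lambda_{\gamma}(t)\,(\rho_{\gamma}(t)-z)^{1/2}\,(1+o(1)),
\]
and the uniform estimate $[z^n]{\bf H}_{\gamma}(z,t)=c(t)\,n^{-3/2}\rho_{\gamma}(t)^{-n}(1+O(1/n))$ for $t$ in a neighborhood of $1$. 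Since $s\mapsto e^s$ is analytic with $e^0=1$ and maps every sufficiently small neighborhood of $0$ into a prescribed neighborhood of $1$, setting $\rho_{\gamma}(s):=\rho_{\gamma}(e^s)$ (and likewise $\pi_{\gamma}(s)$, $\lambda_{\gamma}(s)$, $c_{\gamma}(s)$) produces functions analytic at $s=0$ with $\rho_{\gamma}(0)=\rho$, and assertions (a), (b), (c) follow immediately; in particular $\rho_{\gamma}(s)$ is the dominant singularity of ${\bf H}_{\gamma}(z,e^s)$ because it is so at $s=0$ and depends analytically on $s$.

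The main obstacle is the computational verification in the second step. The polynomial $P_{\gamma}$ has degree $12\gamma-2$ in $X$ — degree $94$ at $\gamma=8$ — so the discriminant $\Delta_{\gamma}(z,0)$ and the attendant root-isolation and branch-point-type checks grow rapidly with $\gamma$; this is exactly why the statement is restricted to $1\le\gamma\le 8$. Two points deserve care within this verification: one must ensure the dominant singularity does not come from a spurious factor of $P_{\gamma}$ that ${\bf H}_{\gamma}$ does not satisfy, and one must rule out an accidental higher-order branch point (a triple or higher zero of $P_{\gamma}(\rho,1,y)$ in $y$), which would invalidate the $n^{-3/2}$ term; both are settled by the explicit computation for each $\gamma$ in the stated range.
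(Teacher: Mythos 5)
Your proposal is correct and follows essentially the same route as the paper: the algebraic equation from Theorem~\ref{T:Hfuneqn}, verification that ${\bf H}_{\gamma}(z,1)$ meets the hypotheses of Theorem~\ref{T:AsymG} for $1\le\gamma\le 8$ (which the paper delegates to Theorem~\ref{T:HUniAsy} in the appendix rather than spelling out the finite computation), then an application of Theorem~\ref{T:AsymAlgPra} and the reparametrisation $t=e^s$. Your cautionary remarks about spurious factors and higher-order branch points are exactly the checks subsumed in that appendix verification, so nothing is missing.
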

\begin{proof}
By Theorem~\ref{T:Hfuneqn}, ${\bf H}_{\gamma}(z,e^s)$ satisfies the
algebraic equation $P_{\gamma}(z,e^s,X)=0$.
Theorem~\ref{T:HUniAsy} implies that for $1\leq \gamma \leq 8$,
${\bf H}_{\gamma}(z,1)$ satisfies the conditions required by
Theorem~\ref{T:AsymG}.
This puts us in position to apply Theorem~\ref{T:AsymAlgPra}, from
which the theorem then follows.
\end{proof}


\section{$\gamma$-structures}


\subsection{Combinatorics of $\gamma$-structures}


\begin{lemma}\label{L:oo}
For any $\gamma\geq 1$, we have
\begin{eqnarray}
{\bf S}_\gamma(z,t,e) & = & \frac{1+z}{1+2z-z e}
                    {\bf H}_\gamma\left(\frac{z(1+z)}{(1+2z-z e)^2},t\right).
\end{eqnarray}
\end{lemma}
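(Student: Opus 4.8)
The plan is to establish the identity by a symbolic (combinatorial) decomposition that exhibits every $\gamma$-shape as being obtained from a $\gamma$-matching by an ``inflation'' of vertices into backbone-intervals, in a way that is compatible with the genus filtration. The key observation is that a $\gamma$-shape is a $\gamma$-matching in which every stack has length exactly one, together with a marking of the $1$-arcs; while a general $\gamma$-matching allows arbitrary stack lengths. So I would first set up the dictionary: starting from a $\gamma$-matching $G$ with $n$ arcs, hence $2n$ vertices, I want to produce a $\gamma$-shape by (i) possibly inserting short noncrossing structure and isolated vertices along the backbone, and (ii) accounting for which arcs of the shape are $1$-arcs. Since passing to the shadow does not change the genus (as recalled in Section~\ref{S:Background}) and by Proposition~\ref{P:genadd} the genus is additive under the block decomposition, the variable $t$ simply passes through the construction unchanged, and the whole identity reduces to a bookkeeping of $z$ and $e$.

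Concretely, I would argue that a $\gamma$-shape decomposes at its arcs and at the $2n+1$ gaps (or rather the intervals between consecutive endpoints along the backbone, together with the two outer regions) exactly as a $\gamma$-matching does, except that each arc may be ``decorated'': an arc of the shape is either an ordinary arc or a $1$-arc, and each of the backbone intervals may be filled with a block of noncrossing/non-structural material that does not alter genus and does not alter the shadow. The generating function for the fillable material in one interval is $\tfrac{1}{1-z}$-type, and the $1$-arc/ordinary-arc distinction at each of the $n$ arcs contributes a factor that, after the substitution, produces the $e$-dependence. The precise substitution that matches the interval-and-arc bookkeeping is exactly the change of variables $z\mapsto \dfrac{z(1+z)}{(1+2z-ze)^2}$ with the overall prefactor $\dfrac{1+z}{1+2z-ze}$, and one checks that under this substitution the generating-function equation defining ${\bf H}_\gamma$ transforms into the one defining ${\bf S}_\gamma$. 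In other words, I would verify that if $Y={\bf H}_\gamma\!\left(\tfrac{z(1+z)}{(1+2z-ze)^2},t\right)$ then $\tfrac{1+z}{1+2z-ze}\,Y$ satisfies the symbolic equation characterizing ${\bf S}_\gamma(z,t,e)$, and invoke the uniqueness part (the analogue of Theorem~\ref{T:Hfuneqn}(b)) to conclude equality.

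The cleanest route is probably to avoid re-deriving a functional equation for ${\bf S}_\gamma$ and instead construct a bijection directly: between $\gamma$-shapes and the class ``$\gamma$-matching with each arc inflated into a short gadget, and each of the surrounding intervals inflated into a noncrossing filler.'' One builds a $\gamma$-shape from a $\gamma$-matching $G$ by replacing $G$'s underlying shadow-structure with stacks of length one (forced) and then inserting, around each of the $2n+1$ positions, a possibly empty sequence of ``free'' material; the $e$-marking tracks which of the $n$ arcs ends up adjacent on both sides with no inserted material, i.e.\ becomes a genuine $1$-arc. Translating the gadget sizes into generating functions gives precisely: each arc contributes $z$, each of the two $P$-type neighborhoods of an arc contributes a geometric series, the $\sigma$-type intervals contribute ${\bf H}$-blocks, and collecting the $n$-th powers yields the stated substitution. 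Throughout, additivity of genus (Proposition~\ref{P:genadd}) guarantees that $t$ is inert, which is the point of the Remark following Theorem~\ref{T:Hfuneqn}.

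The main obstacle I anticipate is getting the arc-decoration bookkeeping exactly right so that the quadratic denominator $(1+2z-ze)^2$ and the linear numerator $(1+z)$ emerge with the correct signs and the correct placement of the $e$: one must be careful that the two ``sides'' of each arc are treated symmetrically, that the outermost regions of a shape are handled consistently with the inner ones, and that no noncrossing/$1$-arc configuration is double-counted when it could be read either as filler or as part of a collapsed stack. Once the bijection (equivalently, the substitution) is pinned down on the level of the defining equations, the conclusion is immediate from the uniqueness of the solution analytic at the origin with nonnegative coefficients.
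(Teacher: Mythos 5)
There is a genuine gap, and it lies exactly where your formula has to come from. Your construction runs in the wrong direction: you propose to obtain a $\gamma$-shape from a $\gamma$-matching by inserting isolated vertices and ``noncrossing filler'' into the backbone intervals, with $P$-interval gadgets and ${\bf H}$-blocks in the $\sigma$-intervals. But a $\gamma$-shape is itself a $\gamma$-matching (every vertex is paired, so there are no isolated vertices and no room for filler) whose only constraint is that all stacks have length one; the inflation with $P$- and $\sigma$-intervals and inserted ${\bf H}$-blocks that you describe is the shape-to-structure (respectively shadow-to-matching) inflation of Theorems~\ref{T:Hfuneqn} and \ref{T:GHrelation}, not a map from matchings to shapes. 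The actual relation between ${\bf S}_\gamma$ and ${\bf H}_\gamma$ comes from the opposite projection: every $\gamma$-matching collapses onto a unique $\gamma$-shape, each shape-arc inflating to a stack of length $\ge 1$, and the identity of the lemma is the \emph{inversion} of that relation, refined so as to keep track of which shape-arcs are $1$-arcs (this refinement is what produces the prefactor $\frac{1+z}{1+2z-ze}$ and the denominator $(1+2z-ze)^2$, and it is not recoverable from the unrefined statement ${\bf H}_\gamma(z,t)={\bf S}_\gamma\bigl(\tfrac{z}{1-z},t,1\bigr)$). In your write-up the substitution is simply asserted to be ``exactly'' the right one, and your final paragraph concedes that the bookkeeping yielding $(1+z)$ and $(1+2z-ze)^2$ is the unresolved obstacle --- but that bookkeeping \emph{is} the lemma. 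Your fallback, verifying that $\frac{1+z}{1+2z-ze}\,{\bf H}_\gamma\bigl(\tfrac{z(1+z)}{(1+2z-ze)^2},t\bigr)$ satisfies ``the symbolic equation characterizing ${\bf S}_\gamma$'' and invoking uniqueness, is not available either, since no independent functional equation for ${\bf S}_\gamma(z,t,e)$ is set up anywhere (in the paper or in your proposal); producing one is essentially equivalent to proving the lemma.

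For comparison, the paper does not reprove the univariate identity at all: it cites Lemma~3 of \cite{Han:11a}, which establishes ${\bf S}_\gamma(z,e)=\frac{1+z}{1+2z-ze}\,{\bf H}_\gamma\bigl(\tfrac{z(1+z)}{(1+2z-ze)^2}\bigr)$, and the only new content of Lemma~\ref{L:oo} is that collapsing stacks and adding or deleting $1$-arcs leave the genus unchanged, so the variable $t$ can be carried through the construction (this is the role of Proposition~\ref{P:genadd}). You do identify this genus-compatibility point correctly, which is the part that actually needs checking here; if you wish to argue from first principles instead of citing \cite{Han:11a}, you must first derive the inflation relation from shapes to matchings with the $1$-arc marking and then invert it, rather than inflate matchings into shapes.
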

\begin{proof}
Note that collapsing the stacks, adding or deleting $1$-arcs do not change the genus. Therefore, we can extend
the function equation of Lemma 3 in \cite{Han:11a} to the bivariate case with parameter $t$ marking the genus.
\end{proof}

Using symbolic methods we can conclude from Lemma~\ref{L:oo}
\begin{lemma}
Let $\lambda$ be a fixed $\gamma$-shape of genus $g$ with $s\geq 1$ arcs and
$m\geq 0$ 1-arcs. Then the generating function of $\tau$-canonical
$\gamma$-diagrams containing no $1$-arc that have shape $\lambda$ is given by
$$
{\bf G}^{\lambda}_{\tau,\gamma}(z,t)=(1-z)^{-1}\left(\frac{z^{2\tau}}
{(1-z^2)(1-z)^2-(2z-z^2)z^{2\tau}}\right)^s \, z^m \, t^g.$$
In particular, ${\bf G}^\lambda_{\tau,\gamma} (z,t)$ depends only upon the genus, the
number of arcs and $1$-arcs in $\lambda$.
\end{lemma}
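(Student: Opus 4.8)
The plan is to build the generating function $\mathbf{G}^{\lambda}_{\tau,\gamma}(z,t)$ by an inflation procedure starting from the fixed $\gamma$-shape $\lambda$, exactly mirroring the three-step construction in the proof of Theorem~\ref{T:Hfuneqn}, but now tracking lengths (number of vertices) rather than numbers of arcs, and specializing to the $\tau$-canonical situation with no $1$-arcs. The key point that makes this possible is the observation already recorded in Lemma~\ref{L:oo}: collapsing stacks and adding or deleting $1$-arcs does not change the genus, so the exponent $t^g$ simply factors out and rides along untouched throughout the inflation. Since $\lambda$ has $s$ arcs, $m$ of which are $1$-arcs, and genus $g$, I expect the answer to be a product of $s$ identical ``arc factors'' coming from inflating the $s$ arcs of $\lambda$ into $\tau$-canonical stacks, times a factor $z^m$ accounting for the $m$ one-arcs of $\lambda$ that must be re-expanded into legal (length $\geq 1$) backbone stretches, times an overall $(1-z)^{-1}$ for an outermost unpaired stretch, times $t^g$.

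First I would fix notation for the inflation: each arc of $\lambda$ is replaced by a $\tau$-canonical stack, i.e.\ a stack of length $\geq \tau$, and between and around these stacks one inserts intervals of unpaired vertices (the $\sigma$-intervals and $P$-intervals of Section~\ref{S:Background}). Tracking the variable $z$ per vertex, a $\tau$-canonical stack of length $\ell\geq\tau$ built from a single arc of $\lambda$ contributes $\sum_{\ell\geq\tau} z^{2\ell}=\dfrac{z^{2\tau}}{1-z^2}$ for its paired vertices. Next, the inner $\sigma$-interval of each such stack may carry an arbitrary $\tau$-canonical $\gamma$-substructure with no $1$-arcs, and the $P$-intervals admit additional such substructures as well; packaging these insertions gives, after a geometric-series resummation analogous to Steps I and II in the proof of Theorem~\ref{T:Hfuneqn}, the single-arc factor $\dfrac{z^{2\tau}}{(1-z^2)(1-z)^2-(2z-z^2)z^{2\tau}}$. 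The factors $(1-z^2)(1-z)^2$ and $(2z-z^2)z^{2\tau}$ are exactly the ``denominator'' one obtains from the self-referential insertion of substructures into the two $P$-intervals and the $\sigma$-interval of a stack, with the $1$-arc–free canonicity condition forcing each inserted stretch to have length $\geq 1$ (hence the $(1-z)$'s rather than $(1-z)^{-1}$'s appearing where isolated unpaired vertices would otherwise be allowed).

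Carrying this out for all $s$ arcs of $\lambda$, using the additivity of genus (Proposition~\ref{P:genadd}) to justify that the $t$-weights multiply so that the composite structure has weight $t^g$, and multiplying by $z^m$ for the $m$ positions in $\lambda$ that were originally $1$-arcs (each such $1$-arc of the shape, being a $1$-arc, contributes only the bookkeeping factor $z^m$ once stacks are collapsed — this is where the ``$z^m$'' comes from and why the result depends only on $g$, $s$, $m$), and by $(1-z)^{-1}$ for the outermost unpaired interval, yields
\[
\mathbf{G}^{\lambda}_{\tau,\gamma}(z,t)=(1-z)^{-1}\left(\frac{z^{2\tau}}{(1-z^2)(1-z)^2-(2z-z^2)z^{2\tau}}\right)^s z^m t^g,
\]
and in particular the formula manifestly depends only on $g$, $s$ and $m$. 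The main obstacle I anticipate is getting the geometric-series resummation in the inflation step exactly right — in particular correctly identifying which intervals must be nonempty under the $\tau$-canonical, $1$-arc–free constraint, so as to produce precisely the polynomial $(1-z^2)(1-z)^2-(2z-z^2)z^{2\tau}$ in the denominator rather than a superficially similar but incorrect expression; this is a careful but essentially routine symbolic-method computation, and the structural input (genus additivity, genus-invariance of stack-collapsing and $1$-arc manipulation) is already available from the earlier results.
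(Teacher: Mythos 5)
Your overall route --- inflate the fixed shape $\lambda$ and let the genus marker $t^{g}$ ride along unchanged, justified by Proposition~\ref{P:genadd} and the observation in Lemma~\ref{L:oo} that collapsing stacks and adding or deleting vertices preserves genus --- is exactly the intended one; the paper itself gives no further argument, deferring the univariate inflation computation to \cite{Han:11a}. However, your write-up has a genuine gap, and it sits precisely at the point you yourself flag as the ``anticipated obstacle'': the derivation of the arc factor, which is the entire quantitative content of the lemma, is not carried out, and the construction you describe would not produce it.

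Concretely, you assert that in the inflation ``the inner $\sigma$-interval of each stack may carry an arbitrary $\tau$-canonical $\gamma$-substructure with no $1$-arcs, and the $P$-intervals admit additional such substructures as well.'' For a \emph{fixed} shape this is false: any inserted substructure containing arcs survives the shape projection and changes the shape, and such insertions would in any case lead to a self-referential equation involving ${\bf G}_{\tau,\gamma}(z,t)$ itself (this is the mechanism of Theorem~\ref{T:Hfuneqn} and Theorem~\ref{T:GHrelation}), not the closed formula, rational in $z$, claimed in the lemma. The correct inflation inserts only unpaired vertices: each of the $s$ shape-arcs becomes a nested sequence of stacks of length $\geq\tau$, each stack contributing $\frac{z^{2\tau}}{1-z^{2}}$, with consecutive stacks separated by a nonempty pair of unpaired gaps contributing $\frac{1}{(1-z)^{2}}-1=\frac{2z-z^{2}}{(1-z)^{2}}$ (nonempty because otherwise the two stacks would merge under the projection --- this condition comes from shape-preservation, not from $\tau$-canonicity as you suggest); summing the geometric series gives the stem factor $\frac{z^{2\tau}(1-z)^{2}}{(1-z^{2})(1-z)^{2}-(2z-z^{2})z^{2\tau}}$. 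Then the $2s+1$ intervals of $\lambda$ receive arbitrary unpaired stretches, except that the interval below each of the $m$ $1$-arcs of $\lambda$ must receive at least one unpaired vertex, since the inflated structure contains no $1$-arcs and the innermost arc of the corresponding stem would otherwise be one; this yields $z^{m}(1-z)^{-(2s+1)}$, and absorbing a factor $(1-z)^{-2}$ into each stem gives exactly $(1-z)^{-1}\bigl(\frac{z^{2\tau}}{(1-z^{2})(1-z)^{2}-(2z-z^{2})z^{2\tau}}\bigr)^{s}z^{m}t^{g}$. In particular your explanation of $z^{m}$ (``the $1$-arc contributes only the bookkeeping factor once stacks are collapsed'') misses the actual mechanism, namely the no-$1$-arc constraint forcing an insertion under each shape $1$-arc; as it stands, neither the denominator nor the $z^{m}$ is actually derived in your argument.
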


The generating function of $\tau$ canonical $\gamma$-structures now follows:

\begin{theorem}\label{T:GHrelation}
Suppose $\gamma,\tau\geq 1$  and let  $u_\tau(z)
=\frac{(z^2)^{\tau-1}}{z^{2\tau}-z^2+1}$. Then the generating function
${\bf G}_{\tau,\gamma}(z,t)$ is algebraic and given by
\begin{eqnarray}
{\bf G}_{\tau,\gamma}(z,t) & = & \frac{1}{u_\tau(z) z^2-z+1}\
                            {\bf H}_\gamma\left(\frac{u_\tau(z)z^2}
                            {\left(u_\tau(z) z^{2}-z+1\right)^2},t\right).
\end{eqnarray}
\end{theorem}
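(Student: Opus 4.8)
The plan is to derive the formula for ${\bf G}_{\tau,\gamma}(z,t)$ by summing the block-level generating function from the preceding lemma over all $\gamma$-shapes, and then recognizing the resulting sum as a substitution into ${\bf S}_\gamma$ (equivalently, via Lemma~\ref{L:oo}, into ${\bf H}_\gamma$). Concretely, I would first observe that every $\tau$-canonical $\gamma$-structure has a unique underlying shape $\lambda\in\mathcal{S}_\gamma$, obtained by collapsing stacks and discarding the (absent) $1$-arcs, so that
\[
{\bf G}_{\tau,\gamma}(z,t)=\sum_{\lambda\in\mathcal{S}_\gamma}{\bf G}^\lambda_{\tau,\gamma}(z,t).
\]
Substituting the closed form from the previous lemma and writing $w(z):=\dfrac{z^{2\tau}}{(1-z^2)(1-z)^2-(2z-z^2)z^{2\tau}}$, this becomes
\[
{\bf G}_{\tau,\gamma}(z,t)=(1-z)^{-1}\sum_{g,s,m}{\bf s}_\gamma(g,s,m)\,w(z)^s\,z^m\,t^g
=(1-z)^{-1}\,{\bf S}_\gamma\bigl(w(z),t,z\bigr),
\]
where in the last step I use that $\sum_{g,s,m}{\bf s}_\gamma(g,s,m)\,t^g z_1^s z_2^m$ is exactly ${\bf S}_\gamma(z_1,t,z_2)$ by the definition of ${\bf S}_\gamma$ in Section~2.

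Next I would plug in Lemma~\ref{L:oo}, namely ${\bf S}_\gamma(Z,t,e)=\dfrac{1+Z}{1+2Z-Ze}\,{\bf H}_\gamma\!\left(\dfrac{Z(1+Z)}{(1+2Z-Ze)^2},t\right)$, with $Z=w(z)$ and $e=z$. The bulk of the work is then the algebraic simplification: one must check that
\[
(1-z)^{-1}\cdot\frac{1+w(z)}{1+2w(z)-z\,w(z)}=\frac{1}{u_\tau(z)z^2-z+1},
\qquad
\frac{w(z)\bigl(1+w(z)\bigr)}{\bigl(1+2w(z)-z\,w(z)\bigr)^2}=\frac{u_\tau(z)z^2}{\bigl(u_\tau(z)z^2-z+1\bigr)^2}.
\]
This is a direct rational-function computation: substitute the definitions of $w(z)$ and $u_\tau(z)=\dfrac{(z^2)^{\tau-1}}{z^{2\tau}-z^2+1}$, clear denominators, and verify the two polynomial identities. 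A convenient bookkeeping device is to note that $w(z)=\dfrac{u_\tau(z)z^2}{(1-z)\bigl((1-z)-z\,u_\tau(z)z\bigr)}$ — or some similarly factored form — which makes the numerator/denominator cancellations with the $(1-z)$ and $(1+2w-zw)$ factors transparent; once the two substitution arguments match, the identity ${\bf G}_{\tau,\gamma}(z,t)=\dfrac{1}{u_\tau(z)z^2-z+1}\,{\bf H}_\gamma\!\left(\dfrac{u_\tau(z)z^2}{(u_\tau(z)z^2-z+1)^2},t\right)$ follows immediately. Algebraicity is then inherited: ${\bf H}_\gamma(z,t)$ is algebraic by Theorem~\ref{T:Hfuneqn}(a), and ${\bf G}_{\tau,\gamma}$ is obtained from it by substituting rational functions of $z$ and multiplying by a rational function, so it satisfies a polynomial equation over $\mathbb{Z}[z,t]$.

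The main obstacle is purely computational rather than conceptual: it is the verification of the two rational-function identities above, and in particular keeping track of the exponent $\tau$ in $u_\tau(z)$ and $w(z)$ so that the algebra goes through for every $\tau\ge 1$ simultaneously. A secondary point that needs a sentence of care is the legitimacy of the substitution $Z\mapsto w(z)$, $e\mapsto z$ into the bivariate (really trivariate) series ${\bf S}_\gamma$: since $w(z)$ has a zero of order $2\tau$ at $z=0$ and ${\bf S}_\gamma$ is a formal power series in its first variable with no constant term in that variable for the relevant range, the composition is a well-defined formal power series in $z$, and the identity then holds as an identity of formal power series (hence of analytic functions near $0$). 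With these two items dispatched, the theorem is proved.
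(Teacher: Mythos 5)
Your proposal is correct and is essentially the proof the paper intends: the paper merely remarks that Theorem~3 of \cite{Han:11a} extends to the bivariate setting because, by Proposition~\ref{P:genadd}, the genus marker $t$ is unaffected by the inflation of shapes into structures, and your argument carries out exactly that extension --- summing the shape-level generating functions ${\bf G}^{\lambda}_{\tau,\gamma}(z,t)$ over all $\gamma$-shapes to get $(1-z)^{-1}{\bf S}_{\gamma}(w(z),t,z)$ and then substituting Lemma~\ref{L:oo}. The two rational-function identities you reduce the statement to do hold (both sides simplify via $1+w=\frac{(1-z)^2(1-z^2+z^{2\tau})}{D}$ and $u_\tau(z)z^2-z+1=\frac{(1-z)^2(1+z)+(2-z)z^{2\tau}}{1-z^2+z^{2\tau}}$ with $D$ the denominator of $w$), so the computational core goes through as claimed.
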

As for the proof of Theorem~\ref{T:GHrelation},
Proposition~\ref{P:genadd} guarantees that the topological genus
is soley generated by the crossing pattern of the components and
not affected by inflation or the adding of vertices. It is therefore
straightforward to extend the functional equation established in
Theorem $3$ in \cite{Han:11a} to the bivariate case.


\subsection{The genus distribution of $\gamma$-structures}


In this section we study the random variable $X_{n,\tau,\gamma}$
having the distribution
\[
\mathbb{P}(X_{n,\tau,\gamma}=g)=\frac{{\bf G}_{\tau,\gamma}(g,n)}{{\bf G}_{\tau,\gamma}(n)},
\]
where $g=0,1,\ldots,\lfloor \frac{n}{2}\rfloor$.
We shall prove the following
\begin{theorem}\label{T:kk}
There exist a pair $(\mu_{\tau,\gamma},\sigma_{\tau,\gamma})$ such that the normalized random
variable
\begin{equation*}
Y_{n,\tau,\gamma}=\frac{X_{n,\tau,\gamma}- \mu_{\tau,\gamma} \, n}{\sqrt{{n\,
\sigma_{\tau,\gamma}}^2 }}
\end{equation*}
converges in distribution to a Gaussian variable with a speed of convergence $O(n^{-\frac{1}{2}})$. That
is we have
\begin{equation*}
\lim_{n\to\infty}\mathbb{P}\left(\frac{X_{n,\tau,\gamma}- \mu_{\tau,\gamma}
n}{\sqrt{n\, \sigma_{\tau,\gamma}^2}} < x \right)  =
\frac{1}{\sqrt{2\pi}}\int_{-\infty}^{x}\,e^{-\frac{1}{2}c^2} dc \ ,
\end{equation*}
where $\mu_{\tau,\gamma}$ and $\sigma_{\tau,\gamma}^2$ are given by
\begin{equation}\label{E:cltval}
\mu_{\tau,\gamma}= -\frac{\theta_{\tau,\gamma}'(0)}{\theta_{\tau,\gamma}(0)},
\qquad \qquad \sigma_{\tau,\gamma}^2=
\left(\frac{\theta_{\tau,\gamma}'(0)}{\theta_{\tau,\gamma}(0)}
\right)^2-\frac{\theta_{\tau,\gamma}''(0)}{\theta_{\tau,\gamma}(0)}.
\end{equation}
\end{theorem}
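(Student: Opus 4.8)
The plan is to deduce Theorem~\ref{T:kk} directly from the general central limit theorem, Theorem~\ref{T:normal}, by verifying that the bivariate generating function ${\bf G}_{\tau,\gamma}(z,u)$ (writing $u=e^s$) satisfies its hypotheses. So first I would set $f(z,u)={\bf G}_{\tau,\gamma}(z,u)$, so that $f(n,g)={\bf G}_{\tau,\gamma}(g,n)\geq 0$ and $f(n)={\bf G}_{\tau,\gamma}(n)$, making the random variable $X_{n,\tau,\gamma}$ of the theorem precisely the $\mathbb{X}_n$ of Theorem~\ref{T:normal}. What remains is to establish the uniform asymptotic expansion
\begin{equation*}
[z^n]{\bf G}_{\tau,\gamma}(z,e^s)= c_{\tau,\gamma}(s)\, n^{\alpha}\, \theta_{\tau,\gamma}(s)^{-n}\left(1+O\!\left(\tfrac1n\right)\right)
\end{equation*}
uniformly for $s$ near $0$, with $c_{\tau,\gamma}$ continuous and nonzero near $0$, $\alpha$ constant, and $\theta_{\tau,\gamma}$ analytic near $0$; then the pair $(\mu_{\tau,\gamma},\sigma_{\tau,\gamma})$ and the formulas~(\ref{E:cltval}) follow verbatim from the conclusion of Theorem~\ref{T:normal} with $\gamma(s)$ there replaced by $\theta_{\tau,\gamma}(s)$.

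The core of the argument is the singularity analysis of ${\bf G}_{\tau,\gamma}(z,e^s)$. By Theorem~\ref{T:GHrelation}, ${\bf G}_{\tau,\gamma}(z,t)$ is an explicit rational substitution applied to ${\bf H}_\gamma(z,t)$: namely ${\bf G}_{\tau,\gamma}(z,t)=\vartheta(z)\,{\bf H}_\gamma(\psi(z),t)$ where $\vartheta(z)=1/(u_\tau(z)z^2-z+1)$ and $\psi(z)=u_\tau(z)z^2/(u_\tau(z)z^2-z+1)^2$ are rational functions of $z$ with $\psi(0)=0$, $\vartheta(0)=1$. By Theorem~\ref{T:AsymH}, ${\bf H}_\gamma(z,e^s)$ has a square-root singularity at $z=\rho_\gamma(s)$ with $\rho_\gamma$ analytic at $0$, and the uniform coefficient asymptotics there hold with exponent $-3/2$. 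Pulling the singularity back through $\psi$: the dominant singularity of ${\bf G}_{\tau,\gamma}(z,e^s)$ is the smallest positive root $z_{\tau,\gamma}(s)$ of $\psi(z)=\rho_\gamma(s)$, and one checks via the analytic implicit function theorem (using $\psi'\neq 0$ at the relevant point, which holds because $\psi$ is strictly increasing on $[0,z_{\tau,\gamma}(0))$ as ${\bf G}$ has nonnegative coefficients and is aperiodic) that $z_{\tau,\gamma}(s)$ is analytic at $0$; one must also confirm that the poles of $\vartheta$ and of $\psi$ lie strictly outside $|z|\le z_{\tau,\gamma}(0)$, so they do not interfere with the dominant singularity — this is where a bound $1\le\gamma\le 8$ or an explicit check of the rational functions enters, paralleling the role it plays in Theorem~\ref{T:AsymH}. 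A local Taylor expansion $\psi(z)=\rho_\gamma(s)-\psi'(z_{\tau,\gamma}(s))(z_{\tau,\gamma}(s)-z)+\cdots$ shows that the square-root singularity of ${\bf H}_\gamma$ is transported to a square-root singularity of ${\bf G}_{\tau,\gamma}$ at $z_{\tau,\gamma}(s)$, with the prefactor $\vartheta(z)$ merely contributing an analytic factor; hence ${\bf G}_{\tau,\gamma}(z,e^s)$ again has a $(z_{\tau,\gamma}(s)-z)^{1/2}$-type expansion with analytic coefficients near $s=0$. Setting $\theta_{\tau,\gamma}(s):=z_{\tau,\gamma}(s)$ and applying singularity analysis with its uniformity in the parameter \cite{Flajolet:07a} yields the required estimate with $\alpha=-3/2$.

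I would then simply invoke Theorem~\ref{T:normal}: all its hypotheses are met, so $Y_{n,\tau,\gamma}$ converges in distribution to a standard Gaussian with speed $O(n^{-1/2})$, and the mean and variance constants are $\mu_{\tau,\gamma}=-\theta_{\tau,\gamma}'(0)/\theta_{\tau,\gamma}(0)$, $\sigma_{\tau,\gamma}^2=(\theta_{\tau,\gamma}'(0)/\theta_{\tau,\gamma}(0))^2-\theta_{\tau,\gamma}''(0)/\theta_{\tau,\gamma}(0)$, which is exactly~(\ref{E:cltval}). (One should note for well-posedness that $\sigma_{\tau,\gamma}^2>0$, i.e.\ the distribution is genuinely spread out; this follows because $\theta_{\tau,\gamma}(s)$ is not of the degenerate form $\theta_{\tau,\gamma}(0)e^{-\mu_{\tau,\gamma} s}$, which in turn follows from the genuine genus-dependence of ${\bf G}_{\tau,\gamma}$, e.g.\ from the nontriviality of the polynomials ${\bf Is}_\gamma(z,t)$ feeding into ${\bf H}_\gamma$ through Theorem~\ref{T:Hfuneqn}.)

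The main obstacle I anticipate is not the formal chain of implications but the analyticity-and-dominance bookkeeping in the middle paragraph: one must rule out spurious singularities coming from the denominators $u_\tau(z)z^2-z+1$ and $z^{2\tau}-z^2+1$ in $\vartheta$ and $\psi$, and from the branch structure of ${\bf H}_\gamma(\cdot,e^s)$ composed with $\psi$, confirming that $z=z_{\tau,\gamma}(s)$ is strictly dominant and that $\psi$ is locally invertible there. This is the step that genuinely uses the hypothesis $1\le\gamma\le 8$ (inherited from Theorem~\ref{T:AsymH}, where it guarantees ${\bf H}_\gamma$ is amenable to Theorem~\ref{T:AsymG}) together with an explicit analysis of the elementary rational functions $u_\tau$, $\vartheta$, $\psi$; everything else is a routine transcription of Theorem~\ref{T:AsymAlgPra}'s argument into the composed setting and an appeal to Theorem~\ref{T:normal}.
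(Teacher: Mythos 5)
Your proposal is correct and follows essentially the same route as the paper: the paper also deduces Theorem~\ref{T:kk} from Theorem~\ref{T:normal} with $f(z,e^s)={\bf G}_{\tau,\gamma}(z,e^s)$, obtaining the required uniform asymptotics by composing the singular expansion of ${\bf H}_\gamma(z,e^s)$ (Theorem~\ref{T:AsymH}) with the rational substitution of Theorem~\ref{T:GHrelation}, which is exactly the content of Theorems~\ref{T:uniform} and~\ref{T:rr} (dominant singularity $\theta_{\tau,\gamma}(s)$ from $\psi_\tau(z)=\rho_\gamma(s)$ via the implicit function theorem, with the zeros of $\vartheta_\tau$ ruled out as competing singularities). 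Your middle paragraph simply inlines what the paper packages as those two theorems, and your added remark on $\sigma^2>0$ is a minor extra check the paper does not discuss.
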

In Tables~\ref{Tab:genus}, we present the values of the pairs $(\mu_{\tau,\gamma},\sigma_{\tau,\gamma})$.
\begin{table}[htbp]
\begin{center}
\begin{tabular}{ccccccc}
\hline
&\multicolumn{2}{c}{$\tau=1$} &\multicolumn{2}{c}{$\tau=2$}
&\multicolumn{2}{c}{$\tau=3$}  \\
\hline
& $\mu_{\tau,\gamma}$ & $\sigma_{\tau,\gamma}^2$
& $\mu_{\tau,\gamma}$ & $\sigma_{\tau,\gamma}^2$
& $\mu_{\tau,\gamma}$ & $\sigma_{\tau,\gamma}^2$\\
\hline
$\gamma=1$ & 0.091240 & 0.021067 & 0.041235 & 0.009358 & 0.026632 & 0.006043\\
$\gamma=2$ & 0.112037 & 0.022088 & 0.050436 & 0.009768 & 0.032564 & 0.006288 \\
\hline
&\multicolumn{2}{c}{$\tau=4$}
&\multicolumn{2}{c}{$\tau=5$} &\multicolumn{2}{c}{$\tau=6$}\\
\hline
& $\mu_{\tau,\gamma}$ & $\sigma_{\tau,\gamma}^2$
& $\mu_{\tau,\gamma}$ & $\sigma_{\tau,\gamma}^2$
& $\mu_{\tau,\gamma}$ & $\sigma_{\tau,\gamma}^2$ \\
\hline
$\gamma=1$ & 0.019706 & 0.004481 & 0.015666 & 0.003571 & 0.013017 & 0.002974\\
$\gamma=2$ & 0.024104 & 0.004657 & 0.019170 & 0.003709 & 0.015935 & 0.003087\\
\hline
\end{tabular}
\centerline{}
\smallskip
\caption{\small {\sf Genus distribution:}
The central limit theorem for the genus in $\gamma$-structures.
We list $\mu_{\tau,\gamma}$ and $\sigma^2_{\tau,\gamma}$ derived from eq.~(\ref{E:cltval}).
}
\label{Tab:genus}
\end{center}
\end{table}

Theorem~\ref{T:kk} follows from Theorem~\ref{T:normal} setting
$$
f(z,e^s)={\bf G}_{\tau,\gamma}(z,e^s)
$$
and we shall subsequently verify the applicability of the latter.

The crucial prerequisite for applying Theorem~\ref{T:normal} is accomplished by
Theorem~\ref{T:rr} which in turn is implied by Theorem~\ref{T:uniform} below, which
guarantees
\begin{equation}\label{E:ll}
[z^n]{\bf H}_\gamma(\psi(z),e^s) =A(s)\,n^{-\frac{3}{2}}
\left(\frac{1}{\kappa(s)}\right)^n \left(1+ O\left( \frac{1}{n}\right) \right),\quad
\text{$A(s)$ continuous},
\end{equation}
for $1\leq \gamma\leq 7$. Once eq~(\ref{E:ll}) is established,
the analyticity of
$\kappa(s)$ is guaranteed by the analytic implicit function theorem \cite{Flajolet:07a}.

Note that Theorem~\ref{T:AsymH} already guarantees that
the coefficients of ${\bf H}_{\gamma}(z,e^s) $ are asymptotically given by
\begin{eqnarray}
[z^n]{\bf H}_{\gamma}(z,e^s) &= &
c_\gamma(s) \, n^{-3/2}\, \left(\frac{1}{\rho_{\gamma}(s)}\right)^n
\left(1+ O\left( \frac{1}{n}\right) \right),
\end{eqnarray}
uniformly for $s$ restricted to a small neighborhood of $0$, where $c_\gamma(s)$ is
continuous and nonzero near 0.
However, according to Theorem~\ref{T:GHrelation} we have
\begin{eqnarray*}
{\bf G}_{\tau,\gamma}(z,t) & = & \frac{1}{u_\tau(z) z^2-z+1}\
                            {\bf H}_\gamma\left(\frac{u_\tau(z)z^2}
                            {\left(u_\tau(z) z^{2}-z+1\right)^2},t\right).
\end{eqnarray*}
Consequently we have to establish uniform convergence for generating functions
of the form ${\bf H}_\gamma(\psi(z),e^s)$, where $\psi(z)$ is analytic for
for $|z|<r$.

\begin{theorem}\label{T:uniform}
Suppose $1\leq \gamma\leq 7$. Let $\psi(z)$ be an analytic function for $|z|<r$,
such that $\psi(0)=0$. In addition suppose $\kappa(s)$ is the
unique dominant singularity of ${\bf H}_\gamma(\psi(z),e^s)$ and analytic
solution of $\psi(\kappa(s))=\rho_\gamma(s)$, $|\kappa(s)|\leq r$,
$\frac{d}{d z}\psi(\kappa(s))\neq 0$ for $\vert s\vert<\epsilon$.
Then ${\bf H}_\gamma(\psi(z),e^s)$ has a singular expansion and
\begin{equation*}
[z^n]{\bf H}_\gamma(\psi(z),e^s) = A(s)\,n^{-\frac{3}{2}}
\left(\frac{1}{\kappa(s)}\right)^n\left(1+ O\left( \frac{1}{n}\right) \right)
\quad \text{for some continuous
$A(s)\in\mathbb{C}$},
\end{equation*}
uniformly in $s$ contained in a small neighborhood of $0$.
\end{theorem}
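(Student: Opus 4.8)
The plan is to derive the statement from the square-root singular expansion of ${\bf H}_\gamma(\cdot,e^s)$ at its dominant singularity $\rho_\gamma(s)$, established in Theorem~\ref{T:AsymH}, by transporting that singularity through the analytic change of variable $w=\psi(z)$, and then invoking the transfer theorems of singularity analysis with a parameter, exactly in the manner of the proof of Theorem~\ref{T:AsymAlgPra}.

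First I would fix a neighborhood $N$ of $0$ on which $\rho_\gamma(s)$, $\kappa(s)$, $\pi_\gamma(s)$ and $\lambda_\gamma(s)$ are all analytic (the analyticity of $\kappa(s)$ being part of the hypothesis). By Theorem~\ref{T:AsymH}(b), for $s\in N$ the function ${\bf H}_\gamma(w,e^s)$ continues to a $\Delta$-domain at $w=\rho_\gamma(s)$ and satisfies there
\[
{\bf H}_\gamma(w,e^s)=\pi_\gamma(s)+\lambda_\gamma(s)\,(\rho_\gamma(s)-w)^{1/2}(1+o(1)),\qquad \lambda_\gamma(s)\neq 0 .
\]
Since $\psi$ is analytic near $\kappa(s)$ with $\psi(\kappa(s))=\rho_\gamma(s)$ and $\psi'(\kappa(s))\neq 0$, it is a conformal bijection of a neighborhood of $\kappa(s)$ onto a neighborhood of $\rho_\gamma(s)$; shrinking the $\Delta$-domain of ${\bf H}_\gamma$ so that it lies in the image of this bijection, its $\psi$-preimage contains a genuine $\Delta$-domain at $z=\kappa(s)$, because a conformal map preserves the angular opening of a dented disk up to a rotation, which one absorbs by slightly narrowing the wedge. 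On that $\Delta$-domain ${\bf H}_\gamma(\psi(z),e^s)$ is analytic as a composition of analytic functions, and since by hypothesis $\kappa(s)$ is its unique dominant singularity, ${\bf H}_\gamma(\psi(z),e^s)$ is $\Delta$-analytic at $\kappa(s)$.

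To read off the singular behaviour, write $\rho_\gamma(s)-\psi(z)=(\kappa(s)-z)\,h_s(z)$ with $h_s$ analytic at $\kappa(s)$ and $h_s(\kappa(s))=\psi'(\kappa(s))\neq0$; choosing the branch of the square root positive for real $z<\kappa(s)$ gives $(\rho_\gamma(s)-\psi(z))^{1/2}=(\kappa(s)-z)^{1/2}h_s(z)^{1/2}$ with $h_s^{1/2}$ analytic and nonzero at $\kappa(s)$. Substituting into the expansion of ${\bf H}_\gamma$ yields the asserted singular expansion
\[
{\bf H}_\gamma(\psi(z),e^s)=\pi_\gamma(s)+\widetilde\lambda_\gamma(s)\,(\kappa(s)-z)^{1/2}(1+o(1)),\qquad \widetilde\lambda_\gamma(s)=\lambda_\gamma(s)\,\psi'(\kappa(s))^{1/2}\neq0 .
\]
Since ${\bf H}_\gamma(w,e^s)$ is algebraic it in fact admits at $\rho_\gamma(s)$ a full Puiseux expansion in powers of $(\rho_\gamma(s)-w)^{1/2}$, which composition with the analytic $\psi$ turns into a full expansion of ${\bf H}_\gamma(\psi(z),e^s)$ in powers of $(\kappa(s)-z)^{1/2}$ with analytic coefficients; the transfer theorem for a $(\kappa(s)-z)^{1/2}$ singularity then gives
\[
[z^n]{\bf H}_\gamma(\psi(z),e^s)=A(s)\,n^{-3/2}\kappa(s)^{-n}\bigl(1+O(1/n)\bigr),\qquad A(s)=-\frac{\widetilde\lambda_\gamma(s)\,\kappa(s)^{1/2}}{2\sqrt{\pi}} ,
\]
which is continuous (indeed analytic) in $s$ near $0$ because $\kappa(s)$, $\lambda_\gamma(s)$ and $\psi'(\kappa(s))$ are.

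The substantive point, and the one I expect to be the main obstacle, is uniformity in $s$. Here I would argue as in the proof of Theorem~\ref{T:AsymAlgPra} and the uniformity property of singularity analysis in \cite{Flajolet:07a}: because $\psi$ does not depend on $s$ and $\psi'(\kappa(s))$ is continuous and nonvanishing on a compact sub-neighborhood of $0$, the $\psi$-preimages of the $\Delta$-domains can be chosen with a common radius and a common angular opening, the $o(1)$ in the singular expansion is then uniform, and consequently the transfer estimate holds with an $O(1/n)$ uniform in $s$. The delicate bookkeeping is precisely this uniform control of the dented domains under the substitution $\psi$ (and, when $s$ is allowed to range over a complex neighborhood of $0$ as needed for the central limit theorem, of the corresponding tilted $\Delta$-domains); once it is secured, the theorem follows. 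In the application to Theorem~\ref{T:GHrelation}, where $\psi$ is rational, one may alternatively clear denominators to exhibit ${\bf H}_\gamma(\psi(z),e^s)$ as an algebraic function and apply Theorem~\ref{T:AsymAlgPra} directly; the two routes are essentially equivalent.
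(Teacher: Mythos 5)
Your proposal is correct and follows essentially the same route as the paper: it combines the uniform square-root expansion of ${\bf H}_\gamma(\cdot,e^s)$ at $\rho_\gamma(s)$ from Theorem~\ref{T:AsymH} with the local linearization of $\psi$ at $\kappa(s)$ (using $\psi'(\kappa(s))\neq 0$) to obtain a $(\kappa(s)-z)^{1/2}$ singular expansion with coefficients analytic in $s$, and then applies the uniform transfer theorems. The only difference is presentational: the paper first rescales via $\widetilde{\psi}(z,s)=\psi(\kappa(s)z)$ so the singularity sits at $z=1$ and Taylor-expands there before undoing the scaling, whereas you work directly at $z=\kappa(s)$ with the factorization $\rho_\gamma(s)-\psi(z)=(\kappa(s)-z)h_s(z)$ and an explicit $\Delta$-domain transport, which if anything makes the analytic-continuation step more explicit than in the paper.
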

We prove Theorem~\ref{T:uniform} in Section~\ref{S:appendix}.

We proceed by applying Theorem~\ref{T:uniform} in order to derive an asymptotic
formula for the coefficients of ${\bf G}_{\tau,\gamma}(z,e^s)$ viewed as a univariate
generating function, parameterized by $e^s$. The key point here is that this formula
is uniform in the parameter $s$, close to $0$.

\begin{theorem}\label{T:rr}
For $1 \leq \gamma \leq 8$ and $1\leq \tau \leq 10$, ${\bf G}_{\tau,\gamma}(z,e^s)$
has a unique dominant singularity, $\theta_{\tau,\gamma}(s)$, such that for $s$
restricted to a small neighborhood of $0$:\\
{\bf (1)} $\theta_{\tau,\gamma}(s)$ is analytic,\\
{\bf (2)} $\theta_{\tau,\gamma}(s)$ is the solution of minimal modulus of
\[
\frac{u_\tau(z)z^2}{\left(u_\tau(z) z^{2}-z+1\right)^2}= \rho_{\gamma}(s)
\]
{\bf(3)}
\begin{equation*}
[z^n]{\bf G}_{\tau,\gamma}(z,e^s)  =   k_{\tau,\gamma}(s) \,n^{-\frac{3}{2}} \left(\frac{1}{\theta_{\tau,\gamma}(s)}\right)^n\left(1+ O\left( \frac{1}{n}\right) \right)
\end{equation*}
uniformly for $s$ restricted to a small neighborhood of $0$, where $k_{\tau,\gamma}(s)$
is continuous and nonzero near $1$.
\end{theorem}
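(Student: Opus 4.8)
\textbf{Proof proposal for Theorem~\ref{T:rr}.}

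The plan is to apply Theorem~\ref{T:uniform} with the explicit algebraic substitution $\psi(z)=\frac{u_\tau(z)z^2}{(u_\tau(z)z^2-z+1)^2}$ coming from Theorem~\ref{T:GHrelation}, and then transfer the resulting asymptotics through the analytic prefactor $\frac{1}{u_\tau(z)z^2-z+1}$. First I would check that $\psi(z)$ is admissible as an input to Theorem~\ref{T:uniform}: since $u_\tau(z)=\frac{(z^2)^{\tau-1}}{z^{2\tau}-z^2+1}$ has $u_\tau(0)$ finite (indeed $u_\tau(0)=0$ for $\tau\ge 2$ and $u_1(0)=1$), the function $\psi(z)$ is a ratio of polynomial-type expressions whose denominator $u_\tau(z)z^2-z+1$ equals $1$ at $z=0$; hence $\psi$ is analytic in a disc $|z|<r$ around the origin and $\psi(0)=0$. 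Thus the hypotheses "$\psi$ analytic for $|z|<r$, $\psi(0)=0$" are met.

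Next I would produce the candidate dominant singularity $\theta_{\tau,\gamma}(s)$ as the solution of minimal modulus of $\psi(z)=\rho_\gamma(s)$, i.e. of
\[
\frac{u_\tau(z)z^2}{\left(u_\tau(z)z^2-z+1\right)^2}=\rho_\gamma(s),
\]
which is exactly statement {\bf (2)}. Its existence and analyticity near $0$ (statement {\bf (1)}) follow from the analytic implicit function theorem applied at $s=0$: one needs $\frac{d}{dz}\psi\big(\theta_{\tau,\gamma}(0)\big)\ne 0$, which is precisely the non-vanishing of the derivative required as a hypothesis in Theorem~\ref{T:uniform}. Here the constraints $1\le\gamma\le 8$ and $1\le\tau\le 10$ enter: for these finitely many pairs one verifies by explicit computation (using $\rho_\gamma(0)=\rho$, the known radius of convergence of the genus-$\gamma$ matching series, together with the explicit rational function $\psi$) that the minimal-modulus root $\theta_{\tau,\gamma}(0)$ lies strictly inside the disc of analyticity of $\psi$, is a simple root of $\psi(z)-\rho$, and is not cancelled by a zero of the prefactor $u_\tau(z)z^2-z+1$. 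This is the technical heart of the argument and the place where the bounded ranges of $\tau$ and $\gamma$ are genuinely used — it is a finite but nontrivial check that the "obvious" candidate singularity really is dominant and of the right (square-root) type, rather than, say, a pole of $\psi$ or a spurious branch. Once $\frac{d}{dz}\psi(\theta_{\tau,\gamma}(0))\ne 0$ and the minimal-modulus/simplicity conditions are confirmed, Theorem~\ref{T:uniform} applies verbatim with $\kappa(s)=\theta_{\tau,\gamma}(s)$ and yields a singular expansion for ${\bf H}_\gamma(\psi(z),e^s)$ together with
\[
[z^n]{\bf H}_\gamma(\psi(z),e^s)=A(s)\,n^{-\frac32}\left(\frac{1}{\theta_{\tau,\gamma}(s)}\right)^n\left(1+O\!\left(\tfrac1n\right)\right),
\]
uniformly for $s$ near $0$, with $A(s)$ continuous.

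Finally I would pass from ${\bf H}_\gamma(\psi(z),e^s)$ to ${\bf G}_{\tau,\gamma}(z,e^s)=\frac{1}{u_\tau(z)z^2-z+1}\,{\bf H}_\gamma(\psi(z),e^s)$. Since $\frac{1}{u_\tau(z)z^2-z+1}$ is analytic at $z=\theta_{\tau,\gamma}(0)$ and nonzero there (this is part of the check above), multiplying by it does not move or change the nature of the dominant singularity: ${\bf G}_{\tau,\gamma}(z,e^s)$ still has a square-root singularity at $\theta_{\tau,\gamma}(s)$, and by the product rule for singular expansions its singular coefficient is multiplied by $\frac{1}{u_\tau(\theta_{\tau,\gamma}(s))\theta_{\tau,\gamma}(s)^2-\theta_{\tau,\gamma}(s)+1}$, which is continuous and nonzero in $s$ near $0$. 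Applying the transfer theorems and the uniformity property of singularity analysis once more gives
\[
[z^n]{\bf G}_{\tau,\gamma}(z,e^s)=k_{\tau,\gamma}(s)\,n^{-\frac32}\left(\frac{1}{\theta_{\tau,\gamma}(s)}\right)^n\left(1+O\!\left(\tfrac1n\right)\right),
\]
uniformly for $s$ in a small neighborhood of $0$, with $k_{\tau,\gamma}(s)$ continuous and nonzero near $0$, which is statement {\bf (3)}. I expect the main obstacle to be the verification — for each of the finitely many pairs $(\tau,\gamma)$ in the stated ranges — that the minimal-modulus solution of $\psi(z)=\rho_\gamma(s)$ indeed has $\psi'\ne 0$ there and is not obstructed by the poles of $\psi$ or the zeros of the prefactor; everything else is a routine application of Theorem~\ref{T:uniform} and standard singularity-analysis transfer.
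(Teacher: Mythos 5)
Your proposal is correct and follows essentially the same route as the paper: it combines Theorem~\ref{T:GHrelation} with Theorem~\ref{T:uniform} for $\psi=\psi_\tau$, obtains the analyticity of $\theta_{\tau,\gamma}(s)$ from the analytic implicit function theorem applied to $\psi_\tau(z)-\rho_\gamma(s)=0$, and relies on a finite check (for the stated ranges of $\tau,\gamma$) that this root is dominant, simple, and not overtaken by the zeros of $u_\tau(z)z^2-z+1$. Your explicit treatment of the prefactor $\frac{1}{u_\tau(z)z^2-z+1}$ via a product of singular expansions is in fact slightly more careful than the paper's wording, but it is the same argument.
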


\begin{proof}
The first step is to establish the existence and uniqueness of the
dominant singularity $\theta_{\tau,\gamma}(s)$.\\
We denote
\begin{eqnarray}
\vartheta_\tau(z)&=&u_\tau(z) z^{2}-z+1,\\
\psi_{\tau}(z)&=&\frac{u_\tau(z)z^2}{\left(u_\tau(z) z^{2}-z+1\right)^2},
\end{eqnarray}
and consider the equations
\begin{equation*}
\forall \, 1\leq \gamma \leq 8;\qquad F_{\tau,\gamma}(z,s)=\psi_{\tau}(z)-\rho_\gamma(s),
\end{equation*}
where $\rho_\gamma(s)$ is defined in Theorem~\ref{T:AsymH}.
Theorem~\ref{T:GHrelation} and Theorem~\ref{T:uniform} imply that the
singularities of ${\bf G}_{\tau,\gamma}(z,e^s)$ are
are contained in the set of roots of
\begin{equation*}
F_{\tau,\gamma}(z,s)=0\quad\text{and}\quad \vartheta_\tau(z)=0
\end{equation*}
where $1\leq \gamma \leq 8$.
Let $\theta_{\tau,\gamma}$ denote the solution of minimal modulus of
\begin{equation*}
F_{\tau,\gamma}(z,0)=\psi_{\tau}(z)-\rho_\gamma(0)=0.
\end{equation*}
We next verify that, for sufficiently small $\epsilon_\gamma>0$,
$|z-\theta_{\tau,\gamma}|<\epsilon_\gamma$, $|s|<\epsilon_\gamma$, the following
assertions hold
\begin{itemize}
\item $\frac{\partial}{\partial z} F_{\tau,\gamma}(\theta_{\tau,\gamma},0)\neq 0$
\item $\frac{\partial}{\partial z} F_{\tau,\gamma}(z,s)$ and
      $\frac{\partial}{\partial s} F_{\tau,\gamma}(z,s)$ are
      continuous.
\end{itemize}
The analytic implicit function theorem, guarantees the existence of a unique
analytic function $\theta_{\tau,\gamma}(s)$ such that, for $|s|<\epsilon_i$,
\begin{equation*}
 F_{\tau,\gamma}(\theta_{\tau,\gamma}(s),s)=0\quad\text{ and }\quad
\theta_{\tau,\gamma}(0)=\theta_{\tau,\gamma}.
\end{equation*}
Analogously, we obtain the minimal solution  $\delta_\tau$ of
$\vartheta_\tau(z)=0$.
We next verify that the unique dominant singularity of
${\bf G}_{\tau,\gamma}(z,1)$ is the minimal positive solution $\theta_{\tau,\gamma}$ of
$F_{\tau,\gamma}(z,0)=0$ and subsequently using an continuity argument.
Therefore, for sufficiently small $\epsilon$ where
$\epsilon<\epsilon_i$,
$|s|<\epsilon$, the modulus of $\theta_{\tau,\gamma}(s)$, for
$1\leq \gamma \leq 8$
and $\delta_\tau$ are all strictly larger than the modulus of
$\theta_{\tau,\gamma}(s)$. Consequently, $\theta_{\tau,\gamma}(s)$ is
the unique dominant singularity of ${\bf G}_{\tau,\gamma}(z,e^s)$.\\
{\it Claim.} There exists some continuous $ k_{\tau,\gamma}(s)$ such that, uniformly in
$s$, for $s$ in a neighborhood of $0$
\begin{equation*}
[z^n]{\bf G}_{\tau,\gamma}(z,e^s) =   k_{\tau,\gamma}(s) \,n^{-\frac{3}{2}}
\left(\frac{1}{\theta_{\tau,\gamma}(s)}\right)^n\left(1+ O\left( \frac{1}{n}\right) \right).
\end{equation*}
To prove the Claim, let $r$ be some positive real number such that
$\theta_{\tau,\gamma}<r<\delta_\tau$.
For sufficiently small $\epsilon>0$ and $|s|<\epsilon$,
$$
|\theta_{\tau,\gamma}(s)|\leq r.
$$
Then $\psi_{\tau}(z)$ and $\frac{1}{\vartheta_\tau(z)}$ are all analytic in
$|z|\leq r$ and $\psi_{\tau}(0)=0$.
Since $\theta_{\tau,\gamma}(s)$ is the unique dominant singularity of
\begin{equation*}
{\bf G}_{\tau,\gamma}(z,e^s) =\frac{1}{\vartheta_\tau(z)}
\,{\bf H}_\gamma\left(\psi_{\tau}(z),t\right),
\end{equation*}
satisfying
\begin{equation*}
\psi_{\tau}(\theta_{\tau,\gamma}(s))=\rho_\gamma(s)\quad\text{and}\quad
|\theta_{\tau,\gamma}(s)|\leq r,
\end{equation*}
for $|s|<\epsilon$. For sufficiently small $\epsilon>0$,
$\frac{\partial}{\partial z} F_{\tau,\gamma}(z,s)$ is continuous and
$\frac{\partial}{\partial z}F_{\tau,\gamma}(\theta_{\tau,\gamma},0)\neq 0$.
Thus there exists some $\epsilon>0$, such that for $|s|<\epsilon$,
$\frac{\partial}{\partial z} F_{\tau,\gamma}(\theta_{\tau,\gamma}(s),s)\neq 0$.
According to Theorem~\ref{T:uniform}, we therefore derive
\begin{equation*}
[z^n]{\bf G}_{\tau,\gamma}(z,e^s) =   k_{\tau,\gamma}(s) \,n^{-\frac{3}{2}} \left(\frac{1}{\theta_{\tau,\gamma}(s)}\right)^n\left(1+ O\left( \frac{1}{n}\right) \right),
\end{equation*}
uniformly for $s$ restricted to a small neighborhood of $0$, where $k_{\tau,\gamma}(s)$
is continuous and nonzero near $1$.

\end{proof}



\section{Discussion}


Our results trigger a series of intriguing research perspectives. The genus filtration
and in particular the emergence of the bivariate polynomials ${\bf Is}_g(z,t)$ gives
rise to the question whether we can find bijective, constructive proofs in order
to establish recurrences with respect to the topological genus $g$. It would be
fascinating to be able to construct genus $(\gamma+1)$-structures from lower genera.
Such genus-recurrences could have profound algorithmic impact and be of great
practical value.

It is furthermore now clear how to introduce a genus filtration into
$\gamma$-interaction structures \cite{qinjing}. A result of \cite{fenix2bb}
indicates how this can be derived. There it is
proved how to compute the topological genus of a $\gamma$-interaction structure.
The latter formula is in difference to the case of a single backbone not simply
the sum of the genera of its irreducible shadows. This computation can be weaved
into the combinatorial construction presented here in order to refine our
results by the topological genus.

Our analysis of the genus-distribution in $\gamma$-structures shows how the
minimum stack size in these structures affects the expected genus. While for
canonical $1$-structures of length $100$ we have an expected genus of $4$ and
this drops to $2.6$ when requiring a minimum stack-size of $3$,
even to $2.0$ for a minimum stack-size of $4$,  see Fig.~\ref{F:lo}.
\begin{figure}
\begin{center}
\includegraphics[width=0.4\textwidth]{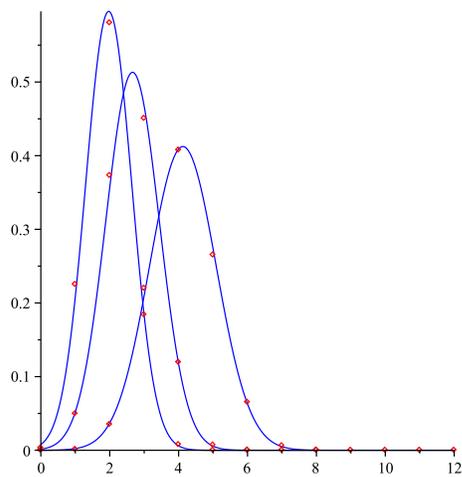}
\end{center}
\caption{\small The effect of minimum stack-size:
the shift of the central limit distribution of
topological genera of $2$-canonical, $3$-canonical and
$4$-canonical $1$-structures for $n=100$.
}\label{F:lo}
\end{figure}
In view of the fact that natural RNA
structures are typically low energy structures and energy is dominated by the
stacking of adjacent base pairs and not by the hydrogen bonds of the individual
base pairs \cite{Mathews:99}, as well as minimum arc-length conditions
\cite{Waterman:78aa},
our results provide some insight why relatively low genera are being
observed in natural RNA structures.


\section{Appendix}\label{S:appendix}



\subsection{Asymptotic analysis of $\gamma$-matchings}

In this section we establish several results employed in the course of
Section~\ref{S:match}. In \cite{Han:11a}, the singular expansion of
${\bf H}_{\gamma}(z)$ has been computed using the method of Newton polygons
and the Newton-Puiseux expansion has been derived.

The following result of \cite{Flajolet:07a} is based on the same arguments but
obsoletes the irreducibility of the polynomials representing the algebraic
equations. As a result, we can compute singular expansions for higher
$\gamma$.

\begin{theorem}\label{T:AsymG}
Let $y(z)=\sum_{n\geq 0}y_n z^n$ be a generating function, analytic at $0$, satisfy
a polynomial equation $\Phi(z,y)=0$. Let $\rho$ be the real dominant singularity
of $y(z)$. Define the resultant of $\Phi(z,y)$ and $\frac{\partial}{\partial y}
\Phi(z,y)$ as polynomial in $y$
\[
\Delta(z)= \mathbf{R} \left(\Phi(z,y), \frac{\partial}{\partial y}\Phi(z,y),y
\right).
\]
{\bf(1)} The dominant singularity $\rho$ is unique and a root of the
resultant $\Delta(z)$ and there exists $\pi=y(\rho)$, satisfying the system of
equations,
\begin{equation}\label{E:phi1}
\Phi(\rho,\pi)=0,\quad \Phi_y(\rho,\pi)=0.
\end{equation}
{\bf(2)} If $\Phi(z,y)$ satisfies the conditions:
\begin{equation}\label{E:phi2}
\Phi_z(\rho,\pi)\neq 0,\quad \Phi_{y y}(\rho,\pi)\neq 0,
\end{equation}
then $y(z)$ has the following expansion at $\rho$
\begin{equation}\label{E:asym1}
y(z)=\pi+ \lambda (\rho-z)^{\frac{1}{2}}+O(\rho-z),\quad \text{for some nonzero
constant } \lambda.
\end{equation}
Further the coefficients of $y(z)$ satisfy
\[
[z^n]y(z) \sim  c \, n^{-\frac{3}{2}} \rho^{-n}, \quad n\rightarrow \infty,
\]
for some constant $c>0$.
\end{theorem}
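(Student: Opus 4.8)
Theorem~\ref{T:AsymG} is the target. Let me plan a proof.

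The plan is to deduce this statement directly from the general machinery of algebraic singularity analysis in Flajolet--Sedgewick (specifically Theorem VII.7 and the surrounding material on the Newton--Puiseux theorem and the implicit function theorem for algebraic functions), so the proof is really a matter of assembling the standard facts and checking that the stated hypotheses \eqref{E:phi1}--\eqref{E:phi2} are exactly what is needed to force a clean square-root singularity. First I would recall that since $y(z)$ is an algebraic function annihilated by $\Phi(z,y)=0$, its singularities can only occur at roots of the leading coefficient of $\Phi$ (viewed as a polynomial in $y$) or at branch points, i.e.\ at the common zeros of $\Phi(z,y)$ and $\Phi_y(z,y)$, which are precisely the zeros of the resultant $\Delta(z)=\mathbf{R}(\Phi,\Phi_y,y)$. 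Since $y(z)$ has non-negative coefficients, Pringsheim's theorem puts the dominant singularity on the positive real axis at some $\rho$; so $\rho$ is a positive real root of $\Delta(z)$, and continuity of the algebraic branch gives the companion value $\pi=y(\rho)=\lim_{z\to\rho^-}y(z)$ satisfying \eqref{E:phi1}. That $\rho$ is \emph{unique} as a dominant singularity would follow from the Drmota--Lalley--Woods style argument, or more simply here from the assumed setup where $y$ is the generating function of a combinatorial class with a strongly connected dependency structure; in the present application (matchings/$\gamma$-structures) one can alternatively invoke aperiodicity of the counting sequence. This is the one spot where the statement is a touch cavalier — strictly one needs either an explicit periodicity/connectivity hypothesis or to cite the relevant uniqueness theorem — so I would phrase step (1) as following from the positivity of coefficients together with the Daffodil/Drmota--Lalley--Woods lemma in~\cite{Flajolet:07a}.

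Next, for the local expansion, I would perform the Newton--Puiseux analysis at $(\rho,\pi)$. Writing $\Phi(z,y)$ as a Taylor expansion about $(\rho,\pi)$ and using \eqref{E:phi1} to kill the constant and the $\partial_y$ term, the leading behaviour is governed by
\[
\Phi(z,y)=\Phi_z(\rho,\pi)(z-\rho)+\tfrac12\Phi_{yy}(\rho,\pi)(y-\pi)^2+(\text{higher order}).
\]
Condition~\eqref{E:phi2} — that $\Phi_z(\rho,\pi)\neq 0$ and $\Phi_{yy}(\rho,\pi)\neq 0$ — says exactly that the Newton polygon of $\Phi$ at this point has a single edge of slope giving $(y-\pi)\sim\text{const}\cdot(\rho-z)^{1/2}$, i.e.\ $\pi$ is a double (not higher-order) root of $\Phi(\rho,\cdot)$ and $\rho$ is a simple root of $\Delta$. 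The Newton--Puiseux theorem then yields a convergent Puiseux series $y(z)=\pi+\sum_{k\geq 1}\lambda_k(\rho-z)^{k/2}$ with $\lambda_1=\lambda\neq 0$ (the sign of the square root being fixed by the requirement that $y(z)$ be real and increasing for $z\uparrow\rho$, which also selects the correct branch), giving \eqref{E:asym1}. Equivalently one can cite the Weierstrass preparation / quadratic-factorization argument used later in the proof of Theorem~\ref{T:AsymAlgPra}: locally $\Phi=(y^2+c_1(z)y+c_2(z))\Lambda(z,y)$ with $\Lambda$ a nonvanishing unit, and $y=\tfrac12(-c_1-\sqrt{c_1^2-4c_2})$ with $c_1^2-4c_2$ having a \emph{simple} zero at $\rho$ precisely because of~\eqref{E:phi2}.

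Finally, the coefficient asymptotics $[z^n]y(z)\sim c\,n^{-3/2}\rho^{-n}$ follow from the standard singularity-analysis transfer theorem (\cite{Flajolet:07a}, Corollary VI.1): an isolated singularity of the type $(\rho-z)^{1/2}$ at the dominant singularity $\rho$, with $y$ analytic in a $\Delta$-domain (a slit disc slightly larger than $|z|<\rho$) — analyticity there being guaranteed by uniqueness of $\rho$ from step (1) and the fact that an algebraic function has only finitely many singularities — transfers to $[z^n](\rho-z)^{1/2}\sim \frac{-1}{2\sqrt\pi}\,n^{-3/2}\rho^{-n+1/2}\cdot\rho^{-1/2}$ up to the overall constant $\lambda$; the $O(\rho-z)$ remainder contributes only at order $n^{-2}\rho^{-n}$ and is negligible, and positivity of the coefficients forces $c>0$. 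The only genuine obstacle I anticipate is making the uniqueness-of-$\rho$ claim in part (1) fully rigorous without additional hypotheses; everything else is a direct unwinding of the cited results from~\cite{Flajolet:07a}, so I would keep the proof short and pointer-heavy rather than reprove the transfer theorem or the Newton--Puiseux theorem from scratch.
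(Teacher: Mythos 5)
Your proposal is correct and follows essentially the same route as the paper: part (1) via the standard facts about singularities of algebraic functions (which the paper simply cites from \cite{Flajolet:07a} and \cite{Hille:62}), part (2) via a Newton--Puiseux/Newton-polygon analysis at $(\rho,\pi)$ where the hypotheses \eqref{E:phi1}--\eqref{E:phi2} force the leading exponent $\tfrac12$, and then the transfer theorem for the coefficient asymptotics. Your remark that the uniqueness of the dominant singularity is the delicate point is fair; the paper handles it only by reference, exactly as you propose to do.
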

\begin{proof}
The proof of {\bf(1)} can be found in \cite{Flajolet:07a} or \cite{Hille:62}
pp. 103.
To prove {\bf(2)}, let $\Psi(z,y)=\Phi(\rho-z,\pi-y)$. Immediately, we have
$\Psi(0,0)=0$. Puiseux's Theorem~\cite{Wall:04} guarantees a solution of
$y-\pi$ in terms of a power series in fractional powers of $\rho-z$.
Note that equations~(\ref{E:phi1}) and (\ref{E:phi2}) are equivalent to
\[
\Psi(0,0)=0,\quad \Psi_y(0,0)=0,\quad \Psi_z(0,0)\neq 0,\quad \Psi_{y y}(0,0)\neq 0.
\]
Then we apply Newton's polygon method to determine the type of expansion and
find the first exponent of $z$ to be $\frac{1}{2}$. Therefore $y(z)$ has the
required form of singular expansion. The asymptotics of the coefficients follows
from eq.~(\ref{E:asym1}) as a straightforward application of the transfer
theorem (\cite{Flajolet:07a}, pp. 389 Theorem VI.3).
\end{proof}

Combining Theorem 1 in \cite{Han:11a} and Theorem~\ref{T:AsymG}, the asymptotic
analysis of ${\bf H}_{\gamma}(z)$ follows, generalizing the results in \cite{Han:11a}.
\begin{theorem}\label{T:HUniAsy}
For $1\leq \gamma \leq 8$, let
\[
\Delta_\gamma(z)= \mathbf{R} \left(P_\gamma(z,X), \frac{\partial}{\partial X}P_\gamma(z,X),X \right)
\]
the resultant of $P_\gamma(z,X)$ and $\frac{\partial}{\partial X}P_\gamma(z,X)$ as polynomials in $X$,
and $\rho_{\gamma}$ denote the real dominant singularity of ${\bf H}_{\gamma}(z)$. \\
{\bf (a)} the dominant singularity $\rho_{\gamma}$ is unique and a root of $\Delta_\gamma(z)$, \\
{\bf (b)} at $\rho_{\gamma}$ we have
\begin{equation*}
{\bf H}_{\gamma}(z) = \pi_\gamma  +  \lambda_\gamma (\rho_{\gamma}-z)^{\frac{1}{2}}+O(\rho_{\gamma}-z),\quad \text{for some nonzero constant } \lambda_{\gamma}.
\end{equation*}
{\bf (c)} the coefficients of ${\bf H}_{\gamma}(z)$ are asymptotically given by
\begin{eqnarray*}
[z^n]{\bf H}_{\gamma}(z) & \sim &  c_\gamma \, n^{-3/2}\,\rho_{\gamma}^{-n}
\end{eqnarray*}
for some $c_\gamma>0$.
\end{theorem}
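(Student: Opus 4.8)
The plan is to feed the algebraic equation of Theorem~\ref{T:Hfuneqn} into the black-box Theorem~\ref{T:AsymG}. Take $\Phi(z,y):=P_\gamma(z,1,y)$, the specialization at $t=1$ of the polynomial produced in Theorem~\ref{T:Hfuneqn}. By that theorem the univariate series $y(z):={\bf H}_\gamma(z)={\bf H}_\gamma(z,1)$ is analytic at $0$, has non-negative coefficients and satisfies $\Phi(z,y(z))=0$; since the number of $\gamma$-matchings with $n$ arcs grows exponentially, $y(z)$ has a finite positive radius of convergence, and by Pringsheim's theorem \cite{Flajolet:07a} its dominant singularity $\rho_\gamma$ lies on the positive real axis. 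Theorem~\ref{T:AsymG} is deliberately phrased so as not to require irreducibility of $P_\gamma(z,1,X)$, so it applies to the branch ${\bf H}_\gamma(z)$ as is. Invoking part (1) of Theorem~\ref{T:AsymG} gives immediately that $\rho_\gamma$ is the unique dominant singularity, that it is a root of the resultant $\Delta_\gamma(z)$, and that there is a value $\pi_\gamma={\bf H}_\gamma(\rho_\gamma)$ solving
\[
P_\gamma(\rho_\gamma,1,\pi_\gamma)=0,\qquad \frac{\partial}{\partial X}P_\gamma(\rho_\gamma,1,\pi_\gamma)=0;
\]
this is assertion (a) and it also locates the point $(\rho_\gamma,\pi_\gamma)$ at which the local analysis must be carried out.

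To reach (b) and (c) via part (2) of Theorem~\ref{T:AsymG} it remains to verify the non-degeneracy conditions (\ref{E:phi2}), that is
\[
\frac{\partial}{\partial z}P_\gamma(\rho_\gamma,1,\pi_\gamma)\neq 0,\qquad \frac{\partial^2}{\partial X^2}P_\gamma(\rho_\gamma,1,\pi_\gamma)\neq 0,
\]
which amount to saying that the branch ${\bf H}_\gamma(z)$ has a genuine square-root (simple) ramification at $\rho_\gamma$. Here I would appeal to Theorem 1 in \cite{Han:11a}, where the Newton--Puiseux expansion of ${\bf H}_\gamma(z)$ was computed for $1\leq\gamma\leq 8$ by the method of Newton polygons and found to be of square-root type --- equivalent to the two displayed partials being non-zero at $(\rho_\gamma,\pi_\gamma)$. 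Alternatively, for each of the eight admissible $\gamma$ one checks the inequalities by a finite computation: form $\Delta_\gamma(z)$, extract its smallest positive root $\rho_\gamma$, solve the two-equation system above for $\pi_\gamma$, and evaluate $\partial_z P_\gamma$ and $\partial^2_{XX}P_\gamma$ at that point. Granting (\ref{E:phi2}), part (2) of Theorem~\ref{T:AsymG} yields the singular expansion
\[
{\bf H}_\gamma(z)=\pi_\gamma+\lambda_\gamma(\rho_\gamma-z)^{1/2}+O(\rho_\gamma-z),\qquad \lambda_\gamma\neq 0,
\]
which is (b), and then the transfer theorem of singularity analysis gives $[z^n]{\bf H}_\gamma(z)\sim c_\gamma\,n^{-3/2}\rho_\gamma^{-n}$ with $c_\gamma>0$, where uniqueness of the dominant singularity from part (1) rules out competing branch points on $|z|=\rho_\gamma$; this is (c).

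The hard part --- and the reason the statement is restricted to $\gamma\leq 8$ --- is exactly the verification of (\ref{E:phi2}). Since $P_\gamma(z,1,X)$ has degree $12\gamma-2$ in $X$, both the Newton-polygon bookkeeping of \cite{Han:11a} and the direct resultant/elimination route grow quickly with $\gamma$, and the square-root type of the singularity must be confirmed rather than assumed. Once that degeneracy check is in place, everything else is a routine invocation of Theorem~\ref{T:AsymG} and standard singularity analysis.
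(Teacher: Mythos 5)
Your proposal follows essentially the same route as the paper: Pringsheim's theorem locates the positive real dominant singularity, Theorem~\ref{T:AsymG} applied to $\Phi(z,y)=P_\gamma(z,1,y)$ yields (a), and the non-degeneracy conditions (\ref{E:phi2}) are verified (via the Newton--Puiseux analysis of \cite{Han:11a} or a finite check for each $1\leq\gamma\leq 8$) to obtain the square-root expansion (b) and, by transfer, the asymptotics (c). In fact your write-up is more explicit than the paper's own two-line proof about where the $\gamma\leq 8$ restriction and the degeneracy check enter.
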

\begin{proof}
Pringsheim¡¯s Theorem (\cite{Flajolet:07a} pp. 240) guarantees that for any $\gamma$, ${\bf H}_{\gamma}(z)$ has a
dominant real singularity $\rho_{\gamma}>0$.
To prove the singular expansion of the function and asymptotic of the coefficients, we verify $P_\gamma(z,X)$ satisfy the condition of Theorem~\ref{T:AsymG} and the results follow.
\end{proof}


\subsection{Proof of Theorem~\ref{T:uniform}}
\begin{proof}
We consider the composite function ${\bf H}_\gamma(\psi(z),e^s)$. In view of
$$
[z^n]f(z,s)=\gamma^n [z^n]f(\frac{z}{\gamma},s),
$$ it suffices to analyze
the function ${\bf H}_\gamma(\psi(\kappa(s) z),e^s)$ and to subsequently rescale
in order to obtain the correct exponential factor. For this purpose we set
\begin{equation*}
\widetilde{\psi}(z,s)=\psi(\kappa(s)z),
\end{equation*}
where $\psi(z)$ is analytic in $|z|<r$. Consequently
$\widetilde{\psi}(z,s)$ is analytic in $|z|<\widetilde{r}$ and
$|s|<\widetilde{\epsilon}$, for some
$1<\widetilde{r},\,0<\widetilde{\epsilon}<\epsilon$,
since it's a composition
of two analytic functions. Taking its Taylor
expansion at $z=1$,
\begin{equation}\label{E:taylor}
\widetilde{\psi}(z,s)=\sum_{n\geq0}\widetilde{\psi}_n(s)(1-z)^n,
\end{equation}
where $\widetilde{\psi}_n(s)$ is analytic in $|s|<\widetilde{\epsilon}$.
The singular expansion of ${\bf H}_\gamma(\psi(z),e^s)$, $1\leq \gamma\leq 8$,
for $z\rightarrow \rho_\gamma(s)$, follows from Theorem~\ref{T:AsymH},
and is given by
\begin{equation*}
 {\bf H}_{\gamma}(z,e^s) = \pi_\gamma(s) + \lambda_\gamma(s)\, \left(\rho_{\gamma}(s) -z \right)^{\frac{1}{2}} \left(1+ o(1) \right).
\end{equation*}
By assumption, $\kappa(s)$ is the unique analytic solution of
$\psi(\kappa(s))=\rho_\gamma(s)$, for $|\kappa(s)|\leq r$, and by construction
${\bf H}_\gamma(\psi(\kappa(s) z),e^s)={\bf H}_\gamma(\widetilde{\psi}(z,s),e^s)$.
In view of eq.~(\ref{E:taylor}), we have for $z\rightarrow 1$ the
expansion
\begin{equation}\label{E:uinner}
\widetilde{\psi}(z,s)-\rho_{\gamma}(s)
=\sum_{n\geq1}\widetilde{\psi}_n(s)(1-z)^n= \widetilde{\psi}_1(s)(1-z)(1+o(1)),
\end{equation}
that is uniform in $s$ since $\widetilde{\psi}_n(s)$ is analytic for
$|s|<\widetilde{\epsilon}$ and
$\widetilde{\psi}_0(s)=\psi(\kappa(s))=\rho_{\gamma}(s)$.

As for the singular expansion of ${\bf H}_\gamma(\widetilde{\psi}(z,s),e^s)$
we derive, substituting the eq.~(\ref{E:uinner})
into the singular expansion of $ {\bf H}_{\gamma}(z,e^s))$, for $z\rightarrow 1$,
\begin{equation*}
\pi_\gamma(s) + c_\gamma (s)\, \left(1 -z \right)^{\frac{1}{2}} \left(1+ o(1)
\right),
\end{equation*}
where $c_\gamma (s) =\lambda_\gamma(s)(- \widetilde{\psi}_1(s))^{\frac{1}{2}}$ and
\begin{equation*}
\widetilde{\psi}_1(s)=\partial_z\widetilde{\psi}(z,s)|_{z=1}
=\kappa(s)\frac{d}{d z}\psi(\kappa(s))\neq 0 \quad \text{\rm for }
\vert s\vert<\epsilon.
\end{equation*}
Furthermore $\pi_\gamma(s) $ is analytic at
$|z|\leq 1$, whence $[z^n]\pi_\gamma(s) $ is exponentially small
compared to $1$. Therefore we arrive at
\begin{equation}\label{E:coeffbiva}
[z^n]{\bf H}_\gamma(\widetilde{\psi}(z,s),e^s)= [z^n] c_\gamma (s)\, \left(1 -z \right)^{\frac{1}{2}} \left(1+ o(1) \right)
\end{equation}
uniformly in $|s|<\widetilde{\epsilon}$.
We observe that $c_\gamma (s)$ is analytic in $|s|<\widetilde{\epsilon}$.
Note that a dependency in the parameter $s$ is only given in the
coefficients $c_k(s)$, that are analytic in $s$.
Standard transfer theorems \cite{Flajolet:07a} imply that
\begin{equation*}
[z^n]{\bf H}_\gamma(\widetilde{\psi}(z,s),e^s) =
A(s)\,n^{-\frac{3}{2}}\left(1+ O\left( \frac{1}{n}\right) \right) \quad \text{\rm for some
$A(s)\in\mathbb{C}$},
\end{equation*}
uniformly in $s$ contained in a small neighborhood of $0$.
Finally, as mention in the beginning of the proof, we use the scaling property
of Taylor expansions in order to derive
\begin{equation*}
[z^n]{\bf H}_\gamma(\psi(z),e^s) =\left(\kappa(s)\right)^{-n}[z^n]{\bf H}_\gamma(\widetilde{\psi}(z,s),e^s)
\end{equation*}
and the proof of the Theorem is complete.
\end{proof}

\bibliographystyle{elsarticle-num}

\end{document}